\newtheorem{theorem}{Theorem}[section]
\newtheorem{lemma}{Lemma}[section]
\newtheorem{corollary}{Corollary}[section]
\newtheorem{remark}{Remark}[section]
\newtheorem{definition}{Definition}[section]
\begin{document}
\title
{\bf Notes on the Invariance of Tautness Under Lie Sphere Transformations}
\author
{Thomas E. Cecil}
\maketitle

\begin{abstract}
An embedding $\phi:V \rightarrow S^n$ 
of a compact, connected manifold $V$ into the unit sphere $S^n \subset {\bf R}^{n+1}$ 
is said to be taut, if every nondegenerate spherical distance function $d_p$, $p \in S^n$, is a perfect
Morse function on $V$, i.e., it 
has the minimum
number of critical points  on $V$ required by the Morse inequalities.
In these notes, we give an exposition of the
proof of the invariance of tautness under Lie sphere transformations due to \'{A}lvarez Paiva \cite{Alv}. First we extend
the definition of tautness of submanifolds of $S^n$ to 
the concept of Lie-tautness of Legendre submanifolds of the contact manifold 
$\Lambda^{2n-1}$ of projective lines on the Lie quadric $Q^{n+1}$.
This definition has the property that if $\phi:V \rightarrow S^n$ is an embedding of a compact, connected manifold $V$, then
$\phi(V)$ is a taut submanifold in $S^n$ if and only if the Legendre lift $\lambda$
of $\phi$ is Lie-taut. Furthermore, Lie-tautness is invariant under the action of Lie sphere transformations on Legendre submanifolds.  
As a consequence, we get that
if $\phi:V \rightarrow S^n$ and $\psi:V \rightarrow S^n$ are two embeddings of a compact, connected  
manifold $V$ into $S^n$, 
such that their corresponding Legendre lifts are related by a Lie sphere transformation,
then $\phi$ is a taut embedding if and only if $\psi$ is a taut embedding.  Thus, in that sense, tautness is invariant under Lie sphere transformations.
The key idea is to formulate tautness in terms of real-valued
functions on $S^n$ whose level sets form a parabolic pencil 
of unoriented spheres in $S^n$, and then show that this is equivalent to the usual formulation 
of tautness in terms of spherical distance functions,
whose level sets in $S^n$ form a pencil of unoriented concentric spheres.  
\end{abstract}

\section{Introduction}
\label{intro}
An embedding $\phi:V \rightarrow S^n$ 
of a compact, connected manifold $V$ into the unit sphere $S^n \subset {\bf R}^{n+1}$ 
is said to be {\em taut}, if every nondegenerate spherical distance function $d_p$, $p \in S^n$, is a perfect
Morse function on $V$, i.e., it has the minimum
number of critical points  on $V$ required by the Morse inequalities. (See Section \ref{sec:1} for more detail.)

In these notes, we give an exposition of the proof of the invariance of tautness under Lie sphere transformations
due to \'{A}lvarez Paiva \cite{Alv}. These notes follow Section 4.6 of the author's book \cite[pp. 82-95]{Cec1} very closely, and much of the text is taken directly from that book. (See also Section 4.6 the book of 
Cecil-Ryan \cite[pp. 224--231]{CR8}
for a shorter version of the proof, which omits some of the detailed calculations.)

To formulate this result properly, it is necessary to consider submanifold theory in the context of Lie sphere geometry, since Lie sphere transformations act on the space of oriented hyperspheres in $S^n$, and not on the space $S^n$ itself.  
(See Section \ref{sec:3} for more detail.)

Following  \'{A}lvarez Paiva \cite{Alv}, we extend
the definition of tautness of submanifolds of $S^n$ to 
the concept of Lie-tautness of Legendre submanifolds of the contact manifold 
$\Lambda^{2n-1}$ of projective lines on the Lie quadric $Q^{n+1}$.
 (See Section \ref{sec:4} for more detail.)

We show that this definition of Lie-tautness has the property that if $\phi:V \rightarrow S^n$ is an embedding of a compact, connected manifold $V$, then $\phi(V)$ is a taut submanifold of $S^n$ if and only if the Legendre lift $\lambda$
of $\phi$ is Lie-taut. Furthermore, we show that
this notion of Lie-tautness is invariant under the action of Lie sphere transformations on Legendre submanifolds.  

As a consequence, we get that
if $\phi:V \rightarrow S^n$ and $\psi:V \rightarrow S^n$ are two embeddings of a compact, connected  
manifold $V$ into $S^n$, 
such that their corresponding Legendre lifts are related by a Lie sphere transformation,
then $\phi$ is a taut embedding if and only if $\psi$ is a taut embedding.  Thus, in that sense, tautness is invariant under Lie sphere transformations.

The key idea is to formulate tautness in terms of real-valued
functions on $S^n$ whose level sets form a parabolic pencil 
of unoriented spheres in $S^n$, and then show that this is equivalent to the usual formulation 
of tautness in terms of spherical distance functions, whose level sets in $S^n$ form a pencil of unoriented concentric spheres.

\section{Taut Submanifolds of Real Space Forms}
\label{sec:1}

Let $\phi:V \rightarrow {\bf R}^n$ be an immersion of a compact, connected manifold $V$ into ${\bf R}^n$
with $\dim V < n$.  For
$p \in {\bf R}^n$, the {\em Euclidean distance function}, $L_p:V \rightarrow {\bf R}$, is defined by the formula
\begin{displaymath}
L_p(x) = |p - \phi(x)|^2.
\end{displaymath}
If $p$ is not a focal point of the submanifold $\phi$, then $L_p$ is a
{\em nondegenerate function}
(i.e., a {\em Morse function}), 
that is, all of its critical points are nondegenerate
(see, for example, Milnor \cite[pp. 32--38]{Mil}).  By the Morse inequalities, 
the number $\mu (L_p)$ of critical points of a
nondegenerate distance function on $V$ satisfies 
\begin{displaymath}
\mu (L_p) \geq \beta (V;{\bf F}), 
\end{displaymath}
the sum of the 
${\bf F}$-Betti numbers 
of $V$ for any field ${\bf F}$.  The immersion $\phi$ is said to be {\em taut} if there exists a field ${\bf F}$
such that every nondegenerate Euclidean distance function has $\beta (V;{\bf F})$ critical points on $V$.  If it is
necessary to distinguish the field ${\bf F}$, we will say that $\phi$ is ${\bf F}$-{\em taut}.  The field
${\bf F} = {\bf Z}_2$ has been satisfactory for most considerations involving tautness
so far, and we will use ${\bf Z}_2$ as the
field in these notes.

Tautness was first studied by Banchoff \cite{Ban1}, who determined all taut 
2-dimensional surfaces in Euclidean space.  Carter and West
\cite{CW1} introduced the term ``taut immersion,'' and
proved many basic results in the field.  Among these is the fact that a taut immersion
must be an embedding, 
since if $p \in {\bf R}^n$ were a double point, then the function $L_p$ would have two
absolute minima instead of just one, as required by tautness.  (See also the books \cite{CR7}, \cite{CR8},
\cite{CC3}, for more on tautness.)

Carter and West \cite{CW1} also showed that tautness is invariant under M\"{o}bius (conformal) transformations 
of ${\bf R}^n \cup \{\infty \}$.  Further, an embedding
$\phi:V \rightarrow {\bf R}^n$ of a compact manifold $V$ is taut if and only if the embedding
$\sigma \phi:V \rightarrow S^n$ has the property that every nondegenerate 
spherical distance function 
\begin{equation}
\label{eq:spherical-dist-fn}
d_p (q) = \cos^{-1} (p \cdot q)
\end{equation}
has $\beta (V;{\bf F})$ critical points on $V$.  Here $S^n$ is the unit sphere centered at the origin in Euclidean space
${\bf R}^{n+1}$, and $\sigma: {\bf R}^n \rightarrow S^n - \{P\}$ is the inverse map of 
stereographic projection $\tau:S^n - \{P\} \rightarrow  {\bf R}^n$
with pole $P \in S^n$. The term $p \cdot q$ denotes the usual Euclidean inner product on 
${\bf R}^{n+1}$.

Since a spherical distance function $d_p$ is closely related to a Euclidean height function
$\ell_p (q) = p \cdot q$, for $p,q \in S^n$, one can easily show that the embedding $\phi$ 
is taut if and only if the spherical embedding $\sigma \phi$ is {\em tight}, 
i.e., every nondegenerate height function $\ell_p$ has $\beta (V;{\bf F})$ critical points
on $V$. It is often simpler to use height functions rather than spherical distance functions when studying tautness
for submanifolds of $S^n$. 

In the proof of the Lie invariance of tautness due to \'{A}lvarez Paiva \cite{Alv},
it is more convenient to consider embeddings into $S^n$ rather
than ${\bf R}^n$.  The remarks in the two paragraphs above show that the two theories are essentially equivalent for
embeddings of compact manifolds $V$ via stereographic projection.

Kuiper \cite{Ku1}--\cite{Ku4} gave a reformulation of critical point theory in terms of an 
injectivity condition on homology 
which has turned out to be very useful in the theory of tight and taut immersions.
Let $f$ be a nondegenerate function on a manifold $V$.  We define the {\em sublevel set}
\begin{equation}
\label{eq:3.6.1}
V_r(f) = \{x \in V \mid f(x) \leq r\}, \quad r \in {\bf R}. 
\end{equation}
The next theorem, which follows immediately from Theorem 2.2 of Morse--Cairns \cite[p. 260]{MC} 
(see also Theorem 2.1 of Chapter 1 of Cecil--Ryan \cite{CR7}), was a key to Kuiper's formulation of these conditions.

\begin{theorem}
\label{thm:3.6.1} 
Let $f$ be a nondegenerate function on a compact, connected manifold $V$.  For a given field ${\bf F}$, the number
$\mu (f)$ of critical points of $f$ equals the sum $\beta (V;{\bf F})$ of the ${\bf F}$-Betti numbers of $V$ if and
only if the map on homology,
\begin{equation}
\label{eq:3.6.2}
H_*(V_r(f);{\bf F}) \rightarrow H_*(V;{\bf F}),
\end{equation}
induced by the inclusion $V_r(f) \subset V$ is injective for all $r \in {\bf R}$.
\end{theorem}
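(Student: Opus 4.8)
The plan is to read off the equivalence from the way the sublevel sets $V_r(f)$ change as $r$ increases, using the standard Morse-theoretic handle decomposition (Milnor \cite{Mil}) together with the long exact homology sequence. Throughout I work with coefficients in the field ${\bf F}$, which is what makes the argument clean: every homology group is an ${\bf F}$-vector space, so I may reason entirely with dimensions, kernels, and images, with no torsion to track. The Morse-theoretic input is the following. For a critical value $c$ of $f$, choose $\epsilon > 0$ so small that $c$ is the only critical value in $[c-\epsilon, c+\epsilon]$; if the critical points at level $c$ have indices $\lambda_1, \dots, \lambda_m$, then $V_{c+\epsilon}$ is homotopy equivalent to $V_{c-\epsilon}$ with cells of dimensions $\lambda_1, \dots, \lambda_m$ attached, so that
\begin{displaymath}
H_i(V_{c+\epsilon}, V_{c-\epsilon}; {\bf F}) \cong \bigoplus_{\lambda_j = i} {\bf F},
\end{displaymath}
and I write $n_i$ for the number of index-$i$ critical points at level $c$. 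Between consecutive critical values the sublevel sets deformation retract onto one another, so the entire homological behavior of the filtration is concentrated at the critical values.

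Next I would feed this into the long exact sequence of the pair $(V_{c+\epsilon}, V_{c-\epsilon})$, with inclusion-induced map $\iota$ and connecting map $\partial$. A routine dimension count in that sequence gives, for the total Betti number $\gamma(r) = \sum_i \dim H_i(V_r(f); {\bf F})$,
\begin{displaymath}
\gamma(c+\epsilon) - \gamma(c-\epsilon) = \mu_c - 2 D_c, \qquad D_c = \sum_i \dim\bigl(\mathrm{im}\,\partial_i\bigr) \ge 0,
\end{displaymath}
where $\mu_c = \sum_i n_i$ is the number of critical points at level $c$. Summing over all critical values, and using $\gamma = 0$ below the minimum of $f$ and $\gamma = \beta(V;{\bf F})$ above the maximum, I obtain $\mu(f) = \beta(V;{\bf F}) + 2D$ with $D = \sum_c D_c \ge 0$. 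Hence $\mu(f) = \beta(V;{\bf F})$ if and only if $D = 0$, that is, if and only if every connecting homomorphism $\partial$ vanishes at every critical value.

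Finally I would match the condition $D = 0$ with injectivity. Across a single critical value the kernel of the inclusion map $\iota_i : H_i(V_{c-\epsilon};{\bf F}) \to H_i(V_{c+\epsilon};{\bf F})$ is exactly $\mathrm{im}\,\partial_{i+1}$, so all these inclusions are injective precisely when every $\partial$ vanishes, i.e.\ precisely when $D = 0$. If this holds, then since any map $H_*(V_r(f);{\bf F}) \to H_*(V;{\bf F})$ factors as a composite of such inclusions together with between-critical-value retractions, it is injective for every $r$. Conversely, if some $\partial_{i+1}$ is nonzero at a value $c$, then a nonzero class $\alpha \in \mathrm{im}\,\partial_{i+1} \subset H_i(V_{c-\epsilon};{\bf F})$ dies already in $V_{c+\epsilon}$, hence maps to $0$ in $H_i(V;{\bf F})$, so injectivity fails at $r = c-\epsilon$. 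Combining the two directions with the count above yields the stated equivalence.

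I expect the main obstacle to be organizational rather than deep: one must set up the Morse theory so that the relative homology of each pair is exactly the asserted direct sum even when several critical points share a critical value, and one must check that injectivity of $H_*(V_r(f);{\bf F}) \to H_*(V;{\bf F})$ at \emph{every} real $r$ --- not merely at values just below critical levels --- follows from injectivity across each individual critical value. Both points are handled by excision together with the deformation retractions of sublevel sets between critical values, which reduce the global statement to the local handle-by-handle analysis above.
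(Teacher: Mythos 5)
Your proposal is correct, but note that the paper itself does not prove this theorem at all: it is stated as following "immediately from Theorem 2.2 of Morse--Cairns \cite{MC}" (see also Theorem 2.1 of Chapter 1 of Cecil--Ryan \cite{CR7}), so your argument is a self-contained replacement for a citation rather than a variant of an argument appearing in the paper. What you supply is the standard Morse-theoretic proof underlying that reference: handle attachment gives $\dim H_i(V_{c+\epsilon},V_{c-\epsilon};{\bf F}) = n_i$, the long exact sequence of the pair gives the count $\gamma(c+\epsilon)-\gamma(c-\epsilon)=\mu_c-2D_c$, telescoping gives $\mu(f)=\beta(V;{\bf F})+2D$ with $D\geq 0$, and exactness ($\ker\iota_i = \mathrm{im}\,\partial_{i+1}$) converts $D=0$ into injectivity across each critical level, hence by functoriality into injectivity of $H_*(V_r(f);{\bf F})\rightarrow H_*(V;{\bf F})$ for every $r$, while a nonvanishing connecting map produces a class that dies just above a critical level, so injectivity fails just below it. The two issues you flag at the end are the only ones requiring care, and both are standard: cells at a common critical level are attached disjointly since nondegenerate critical points are isolated, and for $r$ equal to a critical value $c$ one needs that the inclusion $V_c(f) \subset V_{c+\epsilon}(f)$ is a homotopy equivalence for a Morse function --- this is true, but is not literally Milnor's retraction theorem for intervals free of critical points, since $f^{-1}[c,c+\epsilon]$ contains critical points; it follows from the same handle analysis because the attached cells already lie in $V_c(f)$. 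Granting these standard facts, your dimension count is exactly the ``perfectness'' characterization that Morse--Cairns establish; what your write-up buys is self-containedness, at the cost of the length the paper avoids by outsourcing the proof.
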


Of course, for an embedding $\phi:V \rightarrow S^n$ and a height function 
$\ell_p$, the set 
$V_r(\ell_p)$, is equal to $\phi^{-1}(B)$, where $B$ is the closed ball in $S^n$ obtained by intersecting
$S^n$ with the half-space in ${\bf R}^{n+1}$ determined by the inequality $\ell_p (q) \leq r$.
Kuiper \cite{Ku4} used the 
continuity property of ${\bf Z}_2$-\v{C}ech homology 
to formulate tautness in terms
of $\phi^{-1}(B)$, for all closed balls $B$ in $S^n$, not just those centered at non-focal points of $\phi$.  
Thus, Kuiper proved
the following theorem (see also Theorem 1.12 of Chapter 2 of Cecil--Ryan \cite[p. 118]{CR7}).    
\begin{theorem}
\label{thm:3.6.2} 
Let $\phi:V \rightarrow S^n$ be an embedding of a compact, connected manifold $V$ into $S^n$. Then
$\phi$ is ${\bf Z}_2$-taut if and only if for every closed ball $B$ in $S^n$, the induced homomorphism
$H_*(\phi^{-1}(B)) \rightarrow H_*(V)$ in ${\bf Z}_2$-\v{C}ech homology is injective.
\end{theorem}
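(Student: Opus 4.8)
The plan is to prove both implications by exploiting the fact, recorded above, that for a height function $\ell_p$ the sublevel set $V_r(\ell_p)$ is precisely $\phi^{-1}(B)$ for the closed ball $B = \{q \in S^n \mid \ell_p(q) \le r\}$, so that the closed balls of $S^n$ are exactly the sublevel sets of height functions. Since tautness is equivalent to tightness (every nondegenerate $\ell_p$ being a perfect Morse function), the content of the theorem is to match the injectivity condition of Theorem~\ref{thm:3.6.1}, which is stated in singular homology for sublevel sets of nondegenerate functions, against the Čech-homology injectivity condition for \emph{all} closed balls. The bridge between the two is that on a compact manifold with boundary the ${\bf Z}_2$-Čech and singular homologies coincide, together with the continuity property of ${\bf Z}_2$-Čech homology.

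For the reverse implication, assume $H_*(\phi^{-1}(B)) \rightarrow H_*(V)$ is injective in ${\bf Z}_2$-\v{C}ech homology for every closed ball $B$. Let $\ell_p$ be any nondegenerate height function; I must show $\mu(\ell_p) = \beta(V;{\bf Z}_2)$. For each regular value $r$ of $\ell_p$, the set $V_r(\ell_p) = \phi^{-1}(B)$ is a compact manifold with boundary, hence an ANR, so its \v{C}ech and singular homologies agree, and the hypothesis gives injectivity of $H_*(V_r(\ell_p)) \rightarrow H_*(V)$ in singular homology. At a critical value $c$, the set $V_c(\ell_p)$ deformation retracts onto $V_{c+\epsilon}(\ell_p)$ for small $\epsilon > 0$ (there being no critical value in $(c,c+\epsilon]$), so singular injectivity holds for all $r \in {\bf R}$. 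Theorem~\ref{thm:3.6.1} then yields that $\ell_p$ is perfect, and as this holds for every nondegenerate height function, $\phi$ is ${\bf Z}_2$-taut.

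For the forward implication, assume $\phi$ is ${\bf Z}_2$-taut. If $B$ is a closed ball centered at a non-focal point and of a radius for which the associated value is a regular value of the corresponding height function, then $\phi^{-1}(B)$ is a compact manifold with boundary, and tautness together with Theorem~\ref{thm:3.6.1} gives injectivity of $\check H_*(\phi^{-1}(B)) \rightarrow \check H_*(V)$. For an arbitrary closed ball $B$, whose center may be a focal point, I would write $B = \bigcap_i B_i$ as a nested decreasing intersection of closed balls $B_i$ of the special type just described; this is possible because the focal set has measure zero and regular radii are dense. Then $\phi^{-1}(B) = \bigcap_i \phi^{-1}(B_i)$ is a nested intersection of compacta, and the continuity property of ${\bf Z}_2$-\v{C}ech homology gives $\check H_*(\phi^{-1}(B)) = \varprojlim_i \check H_*(\phi^{-1}(B_i))$. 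A nonzero class $z$ in this inverse limit has nonzero image $z_{i_0}$ in some $\check H_*(\phi^{-1}(B_{i_0}))$; since the map $\check H_*(\phi^{-1}(B)) \rightarrow \check H_*(V)$ factors as the projection to $\check H_*(\phi^{-1}(B_{i_0}))$ followed by the injective map $\check H_*(\phi^{-1}(B_{i_0})) \rightarrow \check H_*(V)$, the image of $z$ in $\check H_*(V)$ is nonzero. Hence injectivity holds for every $B$.

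The hard part is the forward implication: tautness directly controls only balls centered at non-focal points, and the whole theorem rests on propagating the injectivity to balls with focal centers. This is exactly where the continuity of \v{C}ech homology, which singular homology does not enjoy, becomes indispensable, and it is the reason for working over the field ${\bf Z}_2$, so that the relevant inverse limits of homology behave tamely. The one subtle point requiring care is the inverse-limit injectivity step, which uses that the maps into $\check H_*(V)$ are compatible with the bonding maps of the system; this compatibility holds because every inclusion $\phi^{-1}(B) \subset \phi^{-1}(B_i) \subset V$ factors through $V$.
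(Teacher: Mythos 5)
Your proposal is correct, and you should know that the paper itself never proves this statement: it is quoted as Kuiper's theorem, with references to \cite{Ku4} and to Cecil--Ryan \cite{CR7}. What you have reconstructed is essentially the standard argument behind those citations. In particular, the crux of your forward implication --- writing an arbitrary closed ball $B$ as a nested intersection $B = \bigcap_i B_i$ of balls with non-focal centers and regular-value radii, then combining the continuity property of ${\bf Z}_2$-\v{C}ech homology with an inverse-limit injectivity argument --- is exactly the device the paper deploys in equations \eqref{eq:3.6.54}--\eqref{eq:3.6.55a} of its proof of Theorem \ref{thm:3.6.6}. The one structural difference is minor: the paper, following Theorem 3.4 of Eilenberg--Steenrod \cite{EiS}, first notes that the bonding maps $H_*(\phi^{-1}(B_i)) \rightarrow H_*(\phi^{-1}(B_j))$, $i > j$, are injective and deduces injectivity of the projections from the inverse limit, whereas you argue directly that a nonzero element of the inverse limit has a nonzero coordinate and factor the map to $H_*(V)$ through that coordinate. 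Both are valid; yours is marginally more elementary since it does not need the bonding maps to be injective. Your reverse implication (height functions, agreement of \v{C}ech and singular homology on regular sublevel sets, Theorem \ref{thm:3.6.1}) is likewise the standard bridge and is sound.

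Two small repairs to your write-up, neither a gap in the mathematics. First, the deformation retraction at a critical value $c$ goes the other way: when there are no critical values in $(c, c+\epsilon]$, it is $V_{c+\epsilon}(\ell_p)$ that deformation retracts onto $V_c(\ell_p)$; choosing $\epsilon$ so that $c+\epsilon$ is a regular value, the inclusion $V_c(\ell_p) \subset V_{c+\epsilon}(\ell_p)$ then induces an isomorphism on homology, which is what your argument actually uses. Second, your closing claim that the coefficient field ${\bf Z}_2$ is what makes the inverse limits ``behave tamely'' is not quite the point: the continuity property of \v{C}ech homology for compact spaces holds for any coefficient group, and your injectivity step uses no exactness; ${\bf Z}_2$ enters the theory of tautness for other reasons (e.g., so that the Morse inequalities can be equalities without orientability assumptions).
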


In the 1980's,
Pinkall \cite{P3}--\cite{P4} situated the study of submanifolds of $S^n$ or ${\bf R}^n$ within the context of Lie sphere 
geometry.  
This approach has proven to be very useful in subsequent research on 
taut and Dupin submanifolds of $S^n$ or ${\bf R}^n$.  In Section \ref{sec:3}, we give a brief description of this approach to submanifold theory. (See 
also the book \cite{Cec1} for more detail.)

In Section \ref{sec:4}, we present the details of the proof of \'{A}lvarez Paiva \cite{Alv} of the invariance of tautness under Lie sphere transformations. The exposition in these notes follows Section 4.6 of the author's book \cite[pp. 82-95]{Cec1} very closely, and much of the text is taken directly from that book.  (See also the book 
\cite[pp. 224--231]{CR8} by Cecil and Ryan,
for a shorter version of the proof which omits some of the detailed calculations.)

For a generalization of tautness to
submanifolds of arbitrary Riemannian manifolds, see the paper of Terng and Thorbergsson \cite{TTh1}, and for a generalization of tautness to symplectic geometry, see the paper of \'{A}lvarez Paiva \cite{Alv-99}.

\section{Submanifolds in Lie Sphere Geometry}
\label{sec:3}
We now give a brief description of the method for studying submanifolds
in $S^n$ or ${\bf R}^n$ within the context of Lie sphere geometry
(introduced by Lie \cite{Lie}, see also Blaschke \cite{Bl}).  The reader is referred to the papers of
Pinkall \cite{P4}, Chern \cite{Chern}, Cecil and Chern \cite{CC1}--\cite{CC2},
Cecil-Chi-Jensen \cite{CCJ2},  Miyaoka \cite{Mi1}--\cite{Mi4},
or the books of Cecil \cite{Cec1}, or
Jensen, Musso and Nicolodi \cite{Jensen-Musso-Nicolodi}, for more detail.

Lie sphere geometry is situated in real projective space ${\bf P}^n$,
so we now briefly review some concepts and notation from projective geometry.
We define an equivalence relation on ${\bf R}^{n+1} - \{0\}$ by setting
$x \simeq y$ if $x = ty$ for some nonzero real number $t$.  We denote
the equivalence class determined by a vector $x$ by $[x]$.  Projective
space ${\bf P}^n$ is the set of such equivalence classes, and it can
naturally be identified with the space of all lines through the origin
in ${\bf R}^{n+1}$.  The rectangular coordinates $(x_1, \ldots, x_{n+1})$
are called {\em homogeneous coordinates}
of the point $[x]$ in ${\bf P}^n$, and they
are only determined up to a nonzero scalar multiple.  

In these notes, we are working primarily with oriented hyperspheres in the unit sphere $S^n \subset {\bf R}^{n+1}$,
rather than in ${\bf R}^n$. 
So we will describe that case here.  For more detail on case of oriented hyperspheres in ${\bf R}^n$,
see \cite[pp. 14--23]{Cec1}).

A {\em Lie sphere} in $S^n$ is an oriented hypersphere or a point sphere in $S^n$.
The set of all Lie spheres
is in bijective correspondence with the set of all points $[x] = [(x_1,\ldots,x_{n+3})]$
in projective space ${\bf P}^{n+2}$ that lie
on the quadric hypersurface $Q^{n+1}$ 
determined by the equation 
$\langle x, x \rangle = 0$, where
\begin{equation}
\label{Lie-metric}
\langle x, y \rangle = -x_1 y_1 + x_2 y_2 + \cdots +x_{n+2} y_{n+2} - x_{n+3} y_{n+3}
\end{equation}
is a bilinear form of signature $(n+1,2)$ on ${\bf R}^{n+3}$.  
We let  ${\bf R}^{n+3}_2$ denote ${\bf R}^{n+3}$ endowed with
the indefinite inner product (\ref{Lie-metric}).  The quadric 
$Q^{n+1} \subset {\bf P}^{n+2}$  is called the {\em Lie quadric}. A vector $x$ in ${\bf R}^{n+3}_2$ is called {\em spacelike},
{\em timelike}, or {\em lightlike}, respectively, depending on whether $\langle x, x \rangle$ is positive, negative or zero.

The specific correspondence is as follows (see \cite[pp. 16--18]{Cec1}).  We identify $S^n$ 
with the unit sphere in ${\bf R}^{n+1} \subset {\bf R}^{n+3}_2$, where ${\bf R}^{n+1}$ is spanned by the 
standard basis vectors $\{e_2,\ldots,e_{n+2}\}$ in ${\bf R}^{n+3}_2$.
Then the oriented hypersphere with center $p \in S^n$ and
signed radius $\rho$ corresponds to the point in $Q^{n+1}$ with homogeneous coordinates,
\begin{equation}
\label{eq:1.4.4}
(\cos \rho , p, \sin \rho ),
\end{equation}
where $ - \pi < \rho < \pi$.

We can designate the orientation of the sphere by the sign of the radius
$\rho$ as follows.  A positive radius $\rho$ in 
(\ref{eq:1.4.4}) corresponds to the orientation of the sphere given by 
the field of unit normals which are tangent vectors to geodesics in $S^n$ 
going from $-p$ to $p$, and a negative radius corresponds
to the opposite orientation.
Each oriented sphere can be considered in two ways, with center $p$ and signed radius $\rho, - \pi < \rho < \pi$,
or with center $-p$ and the appropriate signed radius $\rho \pm \pi$.
Point spheres $p$ in $S^n$ correspond to those points $[(1, p, 0)]$ in $Q^{n+1}$ with radius $\rho = 0$, and they do not have an orientation.

Due to the signature of the inner product $\langle \ ,\ \rangle$, the
Lie quadric $Q^{n+1}$ contains projective lines but no linear
subspaces of ${\bf P}^{n+2}$ of higher dimension (see, for example, \cite[p. 21]{Cec1}).  
A straightforward calculation
shows that if $[x]$ and $[y]$ are two points on the quadric, 
then the line $[x,y]$ 
lies entirely on $Q^{n+1}$ if and only if the inner product $\langle x,y \rangle=0$.  Geometrically,
this condition means that the hyperspheres in $S^n$ corresponding
to $[x]$ and $[y]$ are in {\em oriented contact}, i.e., they are tangent to each other
and have the same orientation at the point of contact.
For an oriented sphere and a point sphere, oriented contact means that the point lies on the sphere.
The 1-parameter family
of Lie spheres  in $S^n$ corresponding to the points on a line $\ell$ lying on the Lie quadric is called a 
{\em parabolic pencil of spheres}, which we will now describe further (see \cite[pp. 19--23]{Cec1}).

Using linear algebra on the indefinite inner product space ${\bf R}^{n+3}_2$, one can prove the following result
(see \cite[p. 22]{Cec1}).
\begin{theorem}
\label{cor:1.5.5} 
Let $[z]$ be a timelike point in ${\bf P}^{n+2}$, and let $\ell$ a line that lies on $Q^{n+1}$.  Then $\ell$
intersects the space $z^\perp$ in exactly one point.
\end{theorem}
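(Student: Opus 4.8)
The plan is to reduce the statement to a one-variable linear equation along the line $\ell$, and then to dispose of the single genuinely geometric point by computing the signature of the form on the orthogonal complement of the lightlike plane that carries $\ell$.

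First I would introduce coordinates on the line. Write $\ell = [x,y]$ for linearly independent $x,y \in {\bf R}^{n+3}_2$, so that a typical point of $\ell$ is $[sx + ty]$ with $(s,t) \neq (0,0)$. Since $\ell$ lies on $Q^{n+1}$, the characterization recalled just before the statement gives $\langle x,x \rangle = \langle y,y \rangle = \langle x,y \rangle = 0$; that is, $P := \mathrm{span}(x,y)$ is a totally isotropic (lightlike) $2$-plane. A point $[sx + ty]$ lies in $z^\perp$ precisely when
\begin{equation}
\label{eq:pencil-linear}
s\,\langle x,z \rangle + t\,\langle y,z \rangle = 0,
\end{equation}
a single linear equation in $(s,t)$. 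Such an equation always has a nontrivial solution, so $\ell \cap z^\perp \neq \emptyset$; and its solution set is one-dimensional (hence a single projective point) exactly when the coefficient vector $(\langle x,z \rangle, \langle y,z \rangle)$ is nonzero. Thus the entire statement comes down to showing that $z$ cannot be orthogonal to both $x$ and $y$, i.e.\ that $z \notin P^\perp$.

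This last point is the crux, and I would settle it by determining the signature of $\langle\ ,\ \rangle$ restricted to $P^\perp$. Since the ambient form is nondegenerate, $P \subseteq P^\perp$ and $(P^\perp)^\perp = P$, so the radical of the restricted form on $P^\perp$ equals $P^\perp \cap P = P$. Choosing a standard hyperbolic completion of the isotropic pair $x,y$ — dual isotropic vectors $x',y'$ with $\langle x,x' \rangle = \langle y,y' \rangle = 1$ and all other pairings among $x,x',y,y'$ equal to zero — splits the space orthogonally as $\mathrm{span}(x,x') \oplus \mathrm{span}(y,y') \oplus E$, where each hyperbolic plane has signature $(1,1)$. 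Because the ambient signature is $(n+1,2)$, the two hyperbolic planes absorb both negative directions, forcing $E$ to be positive definite of dimension $n-1$. A short check in this basis gives $P^\perp = P \oplus E$, on which the form vanishes on $P$, is positive definite on $E$, and satisfies $P \perp E$; hence the form restricted to $P^\perp$ is positive semidefinite.

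Consequently $P^\perp$ contains no timelike vector, so the timelike $z$ cannot lie in $P^\perp$, and $(\langle x,z \rangle, \langle y,z \rangle) \neq (0,0)$. Equation~(\ref{eq:pencil-linear}) then determines $[sx+ty]$ up to scale, yielding exactly one intersection point, as claimed. The main obstacle is precisely the semidefiniteness assertion: everything else is formal bookkeeping, but that step is where the hypotheses ``signature $(n+1,2)$'' and ``$\ell$ lies on the quadric'' (so that $P$ is a maximal isotropic subspace) are genuinely used, and it is worth isolating the hyperbolic-basis decomposition so that the signature count is transparent.
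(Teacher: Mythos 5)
Your proof is correct. A point of comparison: the paper itself does not prove this theorem — it states it with a reference to \cite[p.~22]{Cec1} — and the standard argument given there is dual to yours. In the cited approach one uses the hypothesis on $z$ directly: since $z$ is timelike, ${\bf R}^{n+3}_2 = \mathrm{span}(z) \oplus z^\perp$ orthogonally, so the form restricted to $z^\perp$ has Lorentz signature $(n+1,1)$; a Lorentz space contains no $2$-dimensional totally isotropic subspace, so the lightlike plane $P$ carrying $\ell$ cannot lie in $z^\perp$; and since $z^\perp$ determines a projective hyperplane, which either contains a line or meets it in exactly one point, the intersection is exactly one point. Your argument establishes the same crucial fact — $P \not\subseteq z^\perp$, equivalently $z \notin P^\perp$ — from the other side, by showing that the form restricted to $P^\perp$ is positive semidefinite with radical $P$. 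Both are legitimate pieces of ``linear algebra on ${\bf R}^{n+3}_2$.'' The cited route is lighter: its only input is that totally isotropic subspaces of a space of signature $(p,q)$ have dimension at most $\min(p,q)$, applied with $q=1$. Your route instead needs the existence of the dual isotropic pair $x', y'$ (the hyperbolic completion); this is standard — it is the first step of the Witt decomposition — but it is the one nontrivial input to your signature count, so you should either cite it or construct $x', y'$ explicitly. In exchange, your computation yields the sharper structural statement $P^\perp = P \oplus E$ with $E$ positive definite, which identifies precisely which points of ${\bf P}^{n+2}$ have polar hyperplanes containing $\ell$, namely the points of the line $\ell$ itself together with no timelike or spacelike points outside it except those on $[P \oplus E] \cap Q^{n+1}$-type loci; this extra information is not needed for the theorem but makes the geometry transparent.
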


Suppose now that $\ell$ is a line lying on the
quadric.  Using Theorem \ref{cor:1.5.5} and equation \eqref{eq:1.4.4}, we see that $\ell$ intersects both
$e_1^\perp$ and $e_{n+3}^\perp$ in exactly one point.  So the corresponding 
parabolic pencil of oriented spheres contains exactly
one point sphere  $[k_1]$ and one great sphere $[k_2]$, represented respectively by the points,
\begin{equation}
\label{eq:point-sphere-great-sphere} 
[k_1] = [(1,p,0)], \quad [k_2] = [(0,\xi,1)],
\end{equation}
on $Q^{n+1}$.  The condition $\langle k_1, k_2 \rangle = 0$ is equivalent to the condition $p \cdot \xi = 0$, i.e., $\xi$
is a unit vector tangent to $S^n$ at $p$.  Hence, the parabolic pencil of spheres corresponding to $\ell$ can
be identified with the point $(p, \xi)$ in  the unit tangent bundle $T_1S^n$ of $S^n$.  

Thus, we have a  a bijective correspondence between the points of $T_1S^n$ and the
manifold  $\Lambda^{2n-1}$ of projective lines on $Q^{n+1}$ given by the map
\begin{equation}
\label{eq:3.1.8}
(p, \xi) \mapsto [k_1(p, \xi), k_2 (p, \xi)],
\end{equation}
where
\begin{equation}
\label{eq:3.1.9}
k_1(p, \xi) = (1,p,0), \quad k_2(p, \xi) = (0,\xi,1).
\end{equation}
Since every line on the quadric contains exactly one point sphere and one great sphere,
the correspondence in \eqref{eq:3.1.8} is bijective.  We use the
correspondence \eqref{eq:3.1.8} to put a differentiable structure on the manifold $\Lambda^{2n-1}$ of projective 
lines that lie on the Lie quadric $Q^{n+1}$
in such a way that
the map in \eqref{eq:3.1.8} becomes a diffeomorphism.
We will refer to an element $(p, \xi)$ in  $T_1S^n$ as a {\em contact element}.

The points on the line $\ell$ can be parametrized as
\begin{equation}
\label{eq:spheres-in-parabolic-pen} 
[K_t] = [\cos t \ k_1 + \sin t \ k_2] = [(\cos t, \cos t \ p + \sin t \ \xi, \sin t)].
\end{equation}
From equation \eqref{eq:1.4.4}, we see that $[K_t]$ corresponds to the oriented sphere in $S^n$ with center
\begin{equation}
\label{eq:1.5.4}
p_t = \cos t \ p + \sin t \ \xi,
\end{equation}  
and signed radius $t$.  
These are precisely the spheres through $p$ in oriented contact with the great sphere
corresponding to $[k_2]$.  Their centers lie along the geodesic in $S^n$ with initial point $p$ and initial velocity
vector $\xi$.

If we wish to work in ${\bf R}^n$, the set of {\em Lie spheres} consists of all oriented hyperspheres, oriented hyperplanes, and point spheres in ${\bf R}^n \cup \{ \infty\}$.  As in the spherical case,
we can find a bijective correspondence between the set of all Lie spheres and the set of
points on $Q^{n+1}$, and the notions of oriented contact and parabolic pencils of
Lie spheres are defined in a natural way (see, 
for example, \cite[pp. 14--23]{Cec1}).

A {\em Lie sphere transformation} 
is a projective transformation of ${\bf P}^{n+2}$ which maps the Lie quadric $Q^{n+1}$
to itself.  In terms of the geometry of $S^n$ (or ${\bf R}^n$), 
a Lie sphere transformation maps Lie spheres to Lie spheres.
Furthermore, since a Lie sphere transformation maps lines on $Q^{n+1}$ to lines on $Q^{n+1}$, 
a Lie sphere transformation preserves oriented contact of Lie spheres  (see Pinkall \cite[p. 431]{P4}
or \cite[pp. 25--30]{Cec1}).

The group of Lie sphere transformations is
isomorphic to $O(n+1,2)/ \{ \pm I \}$, where $O(n+1,2)$
is the orthogonal group for the metric in equation (\ref{Lie-metric}).
A Lie sphere transformation that maps point spheres to point spheres is a {\em M\"{o}bius transformation}, i.e.,
it is induced by a conformal diffeomorphism of $S^n$, and the set of all M\"{o}bius transformations is a subgroup of the Lie sphere group.  
An example of a Lie sphere transformation that is not
a M\"{o}bius transformation is a parallel transformations $P_t$, which fixes the center of each Lie
sphere but adds $t$ to its signed radius (see \cite[pp. 25--49]{Cec1}).

The $(2n-1)$-dimensional manifold $\Lambda^{2n-1}$ of projective lines on the quadric $Q^{n+1}$ has a 
contact structure, i.e., a
$1$-form $\omega$ such that $\omega \wedge (d\omega)^{n-1}$ does not vanish on $\Lambda^{2n-1}$.  The condition $\omega = 0$ defines a codimension one distribution $D$ on $\Lambda^{2n-1}$ which has 
integral submanifolds
of dimension $n-1$, but none of higher dimension.  Such an integral  submanifold 
$\lambda: M^{n-1} \rightarrow \Lambda^{2n-1}$ of $D$ of dimension $n-1$ is called a 
{\em Legendre submanifold} (see \cite[pp. 51--64]{Cec1}).

An oriented hypersurface $f:M^{n-1} \rightarrow S^n$ with field of unit
normals $\xi :M^{n-1} \rightarrow S^n$ naturally induces
a Legendre submanifold $\lambda = [k_1, k_2]$, where 
\begin{equation}
k_1 = (1,f,0), \quad k_2 = (0,\xi ,1),
\end{equation}
in homogeneous coordinates.
For each $x \in M^{n-1}, [k_1(x)]$ is the unique point sphere and
$[k_2 (x)]$ is the unique great sphere in the parabolic pencil of spheres in $S^n$ corresponding to 
the points on the line $\lambda (x)$.
Similarly, an immersed submanifold $\phi :V \rightarrow S^n$ of codimension 
greater than one induces a Legendre submanifold $\lambda$
whose domain is the bundle $B^{n-1}$ of unit normal
vectors to $\phi (V)$.  In each case, $\lambda$ is called the {\em Legendre lift} of the 
corresponding submanifold in $S^n$.
In a similar way, a submanifold of  ${\bf R}^n$ naturally induces a Legendre lift.

If $\beta$  is a Lie sphere transformation, then $\beta$ maps lines on
$Q^{n+1}$ to lines on $Q^{n+1}$, so it naturally
induces a map $\widetilde{\beta }$ from 
$\Lambda ^{2n-1}$ to itself.  If $\lambda$ is a
Legendre submanifold, then one can show that $\widetilde{\beta}\lambda$
is also a Legendre submanifold, which is denoted as
$\beta \lambda$ for short.  These two Legendre
submanifolds are said to be {\em Lie equivalent}.
We will also say that two submanifolds of $S^n$ or ${\bf R}^n$
are Lie equivalent, if their corresponding Legendre lifts
are Lie equivalent.

\section{Lie Invariance  of Tautness}
\label{sec:4}

Let $\phi:V \rightarrow S^n$ be an embedding of a compact, connected manifold $V$ into $S^n$, where 
$S^n$ is the unit sphere centered at the origin in Euclidean space ${\bf R}^{n+1}$.
The key idea of \'{A}lvarez Paiva \cite{Alv} 
is to formulate tautness in terms of real-valued
functions on $S^n$ whose level sets form a parabolic pencil 
of unoriented spheres in $S^n$, and then show that this is equivalent to the usual formulation 
of tautness in terms of spherical distance functions,
whose level sets in $S^n$ form a pencil of unoriented concentric spheres.  
The presentation in these notes is very similar to that in the book \cite[pp. 82--95]{Cec1}, 
and some passages are taken directly from that book.

Recall that the parabolic pencil of oriented spheres determined by the contact element $(p,\xi) \in T_1S^n$ consists of the 
oriented spheres that are all in oriented contact at the contact element $(p,\xi)$.  
As in equation \eqref{eq:spheres-in-parabolic-pen}, these spheres are represented by the points
\begin{equation}
\label{eq:parabolic-pencil}
(\cos r , \cos r\  p + \sin r\  \xi, \sin r),
\end{equation}
where $ - \pi < r < \pi$.  
This sphere has center
\begin{equation}
\label{eq:3.6.4}
q = \cos r\  p + \sin r\  \xi,
\end{equation}
which lies along the geodesic with initial point $p$ and initial velocity vector $\xi$, and it has signed radius $r$. 

To get the corresponding parabolic pencil of unoriented spheres, we restrict the value of $r$
to the interval $(0,\pi)$.  In that case,
\begin{displaymath}
r = d(p,q) = \cos^{-1} (p \cdot q),
\end{displaymath}
the spherical distance between $p$ and $q$, and so $r$ is the radius of the unoriented sphere in the pencil containing $p$ with center $q$.

Given a contact element $(p,\xi) \in T_1S^n$, we want to define a function
\begin{displaymath}
r_{(p,\xi)}: S^n - \{ p\} \rightarrow (0, \pi),
\end{displaymath}
whose level sets are unoriented spheres in the parabolic pencil
of unoriented spheres determined by $(p,\xi)$.
(We will often denote $r_{(p,\xi)}$ simply by $r$ when the context is clear.)  

As noted above, an unoriented sphere in the parabolic pencil must have its center at a point $q = \cos r\  p + \sin r\  \xi$
along the geodesic with initial point $p$ and initial velocity vector $\xi$.  Since the sphere must contain the 
point $p$, the radius of the sphere must equal 
\begin{equation}
\label{eq:unsigned-radius-r}
d(p,q) = \cos^{-1} (p \cdot q) = \cos ^{-1} (\cos r) = r,
\end{equation}
where $0 < r < \pi$.

A point $x$ in $S^n - \{p\}$ lies on this sphere precisely when 
\begin{displaymath}
d(x,q) = d(p,q) = r. 
\end{displaymath} 
This gives the condition
\begin{equation}
\label{eq:unsigned-radius-r-calculation}
\cos^{-1} (x \cdot q) =  \cos^{-1} (p \cdot q) = r,
\end{equation}
 and thus by taking the cosine of both sides of the equation, we get
\begin{equation}
\label{eq:unsigned-radius-r-implicit}
\cos r = x \cdot q = x \cdot (\cos r\  p + \sin r\  \xi).
\end{equation}

So the point $x$ in
$S^n - \{p\}$ lies on precisely one sphere $S_x$ in the parabolic pencil of unoriented spheres, as the spherical radius 
$r$ of the spheres in
the pencil varies from 0 to $\pi$.  The radius $r_{(p,\xi)}(x)$ of $S_x$ is defined implicitly by the equation 
\eqref{eq:unsigned-radius-r-implicit} above,
\begin{equation}
\label{eq:3.6.3}
\cos r = x \cdot (\cos r\  p + \sin r\  \xi).
\end{equation}
This defines a smooth function 
\begin{displaymath}
r_{(p,\xi)}: S^n - \{p\} \rightarrow (0, \pi).
\end{displaymath}
The level sets of the function $r_{(p,\xi)}$  are the unoriented spheres in the parabolic pencil of unoriented spheres
determined by ${(p,\xi)}$.
Some sample values of the function $r_{(p, \xi)}$ are
\begin{displaymath}
r_{(p, \xi)}(\xi) = \pi/4,\quad r_{(p, \xi)}(-p) = \pi/2, \quad r_{(p, \xi)}(-\xi) = 3\pi/4.
\end{displaymath}
Note that the contact element $(p,- \xi)$ determines the same pencil of unoriented spheres, and
the function $r_{(p,- \xi)} = \pi - r_{(p,\xi)}$. (See \cite[pp. 84--88]{Cec1} for diagrams related to the function 
$r_{(p, \xi)}$ and the sphere $S_x$.)

In this section, we are dealing with immersions $\phi: V \rightarrow S^n$, where $V$ is a $k$-dimensional manifold
with $k < n$.  If $x \in V$, we say that a hypersphere $S_x$ and $\phi (V)$ are {\em tangent} at $\phi (x)$ if 
\begin{displaymath}
d\phi (T_x V) \subset T_{\phi(x)} S_x,
\end{displaymath}
where $d\phi$ is the differential of $\phi$.  

A {\em curvature sphere} of $\phi(V)$ at $x$ is
a hypersphere $S_x$ that is tangent to $\phi (V)$ at $\phi (x)$, and
such that the center of $S_x$ is a focal point of $(V,x)$
(see, for example, \cite[pp. 11--20]{CR8}).

The following lemma (see \cite[pp. 84--88]{Cec1})
describes the critical point behavior of a function of the form $r_{(p,\xi)}$ on an immersed
submanifold 
\begin{displaymath}
\phi: V \rightarrow S^n.
\end{displaymath}   
The proof is a long, but straightforward, calculation
similar to the proof of the well-known Index Theorem for distance functions 
on submanifolds of Euclidean space (see, for example, Milnor \cite[pp. 32--38]{Mil}).

\begin{lemma}
\label{lem:3.6.3} 
Let $\phi:V \rightarrow S^n$ be an immersion of a connected manifold $V$ with $\dim V < n$ into $S^n$, 
and let $(p, \xi) \in T_1S^n$
such that $p \notin \phi (V)$.\\ 
{\rm (a)} A point $x_0 \in V$ is a critical point of the function $r_{(p,\xi)}$ if and only if the sphere
$S_{x_0}$ containing $\phi(x_0)$ in the parabolic pencil of unoriented spheres
determined by $(p,\xi)$ and the submanifold $\phi (V)$ are
tangent at $\phi (x_0)$.\\
{\rm (b)} If $r_{(p,\xi)}$ has a critical point at $x_0 \in V$, then this critical point is degenerate if and only
if the sphere $S_{x_0}$ is a curvature sphere of $\phi (V)$ at $x_0$.
\end{lemma}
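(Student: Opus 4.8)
The plan is to convert the implicit definition \eqref{eq:3.6.3} of $r_{(p,\xi)}$ into an explicit formula and then to imitate the proof of the classical Index Theorem for distance functions. Solving \eqref{eq:3.6.3} for the cotangent gives
\[
\cot \big(r_{(p,\xi)}(x)\big) = \frac{x \cdot \xi}{1 - x \cdot p},
\]
which is well defined on $S^n - \{p\}$ because $x \cdot p < 1$ there. Since $\cot : (0,\pi) \to {\bf R}$ is a diffeomorphism, it suffices to study the composition $G = \cot\big(r_{(p,\xi)} \circ \phi\big)$ in place of $r_{(p,\xi)} \circ \phi$: the two functions have exactly the same critical points, and at a critical point their Hessians differ by the nonzero factor $-\csc^2 r$, so they have the same degenerate critical points as well. (The reparametrization reverses Morse indices, but that is irrelevant here.) I would set $a = 1 - \phi(x) \cdot p > 0$ and $b = \phi(x) \cdot \xi$, so that $G = b/a$ along $\phi$.

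For part (a), I would differentiate $G = b/a$ in a tangent direction $X \in T_x V$. A short computation gives
\[
dG(X) = \frac{1}{a^2}\, d\phi(X) \cdot (a\,\xi + b\,p),
\]
so $x_0$ is a critical point if and only if $a\,\xi + b\,p$ is orthogonal to $d\phi(T_{x_0}V)$. The key observation is that this vector points toward the center of $S_{x_0}$: writing $r = r_{(p,\xi)}(x_0)$, the relation $\cot r = b/a$ forces $\cos r = \lambda b$ and $\sin r = \lambda a$ with $\lambda = (a^2+b^2)^{-1/2} > 0$, so the center $q = \cos r\, p + \sin r\, \xi = \lambda\,(a\,\xi + b\,p)$. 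Since $d\phi(T_{x_0}V)$ already lies in $T_{\phi(x_0)}S^n = \phi(x_0)^{\perp}$, and the sphere $S_{x_0} = \{\,y \in S^n : y \cdot q = \cos r\,\}$ has tangent space $\phi(x_0)^{\perp} \cap q^{\perp}$ at $\phi(x_0)$, the condition $q \perp d\phi(T_{x_0}V)$ is precisely the tangency of $\phi(V)$ and $S_{x_0}$ at $\phi(x_0)$. This establishes (a).

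For part (b), I would compute the Hessian of $G$ at a critical point $x_0$ using a curve $\gamma(s)$ in $V$ with $\gamma(0) = x_0$ and $\gamma'(0) = X$, differentiating $G(\gamma(s)) = b/a$ twice. Writing dots for $s$-derivatives, the first-order critical condition $\dot b\, a = b\, \dot a$ cancels the cross terms and leaves
\[
\mathrm{Hess}\,G\,(X,X) = \frac{a\,\ddot b - b\,\ddot a}{a^2} = \frac{1}{a^2}\, \ddot\phi \cdot (a\,\xi + b\,p) = \frac{1}{\lambda a^2}\,\ddot\phi \cdot q,
\]
where $\ddot\phi$ denotes the second derivative of $\phi \circ \gamma$ in ${\bf R}^{n+1}$; note that the contribution of $\gamma''(0)$ to $\ddot\phi$ is tangent to $\phi(V)$, hence annihilated by $q$ at a critical point, which is why this expression is the genuine Hessian. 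I would then write $q = \cos r\, \phi(x_0) + \sin r\, \nu$, where $\nu = (q - \cos r\, \phi(x_0))/\sin r$ is a unit vector tangent to $S^n$ and normal to $\phi(V)$; this uses $q \cdot \phi(x_0) = \cos r$ and $|q| = 1$. Using $\ddot\phi \cdot \phi(x_0) = -|X|^2$ (from $\phi \cdot \phi \equiv 1$) and $\ddot\phi \cdot \nu = \langle A_\nu X, X\rangle$, where $A_\nu$ is the shape operator of $\phi(V) \subset S^n$ in the direction $\nu$, I would arrive at
\[
\mathrm{Hess}\,G\,(X,X) = \frac{\sin r}{\lambda a^2}\,\big\langle (A_\nu - \cot r\, I)\,X,\, X\big\rangle.
\]
Since the scalar factor is nonzero, the critical point is degenerate exactly when $\cot r$ is a principal curvature of $\phi(V)$ at $x_0$ in the direction $\nu$. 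This is precisely the curvature-sphere condition: the sphere of center $q = \cos r\, \phi(x_0) + \sin r\, \nu$ and radius $r$ has its center at the focal point of $(V,x_0)$ determined by the principal curvature $\cot r$, i.e., $S_{x_0}$ is a curvature sphere of $\phi(V)$ at $x_0$. This establishes (b).

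The only real obstacle is the second-derivative computation, and within it the passage from the ambient acceleration $\ddot\phi$ to the shape operator $A_\nu$: one must split $\ddot\phi$ into its three natural pieces (tangent to $\phi(V)$, normal to $\phi(V)$ within $S^n$, and along $\phi(x_0)$), observe that the tangential piece is annihilated by $q$ at a critical point, and use that the difference between the Euclidean and spherical second fundamental forms lies along $\phi(x_0)$ and so contributes nothing to $\ddot\phi \cdot \nu$. This bookkeeping, together with tracking the nonzero constants through the reparametrization $r \mapsto \cot r$, is routine; everything else parallels the classical Index Theorem exactly.
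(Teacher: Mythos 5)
Your proof is correct, and it reaches the paper's conclusions by a genuinely different computational route. The paper never solves the implicit equation \eqref{eq:3.6.3}: it differentiates that equation twice along $\phi(V)$, isolates $YX(r)$ at a critical point, and then must prove that the resulting coefficient $C$ in \eqref{eq:3.6.26} is nonzero, which requires a separate geometric argument (the function $x \mapsto x \cdot v$, with $v$ as in \eqref{eq:3.6.27}, attains its minimum $-\sin r$ on $S_{x_0}$ only at $p$; see \eqref{eq:3.6.28}--\eqref{eq:3.6.29}). You instead solve \eqref{eq:3.6.3} explicitly as $\cot r = (x\cdot\xi)/(1-x\cdot p)$ and work with the reparametrized function $G = b/a$; the quotient rule then replaces implicit differentiation, and the factor multiplying $\langle (A_\nu - \cot r\, I)X, X\rangle$, namely $\sin r/(\lambda a^2)$, is manifestly positive, so the analogue of the paper's nonvanishing-of-$C$ step is automatic. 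The cost, which you handle correctly, is the bookkeeping that composing with the diffeomorphism $\cot : (0,\pi) \to {\bf R}$ preserves critical points and, at a critical point, rescales the Hessian by the nonzero factor $-\csc^2 r$. The geometric core is identical in the two proofs: criticality is orthogonality of $d\phi(T_{x_0}V)$ to the center $q$; the decomposition $q = \cos r\, x_0 + \sin r\, \nu$ (the paper's \eqref{eq:3.6.16}) converts the ambient second derivative into $\sin r\, \langle (A_\nu - \cot r\, I)X, X\rangle$ (compare \eqref{eq:3.6.30}--\eqref{eq:3.6.32}); and degeneracy becomes the curvature-sphere condition. One small point to make explicit in a final write-up: your curve computation produces only the quadratic form $\mathrm{Hess}\, G(X,X)$, whereas degeneracy refers to the bilinear form, so you should add the (routine) remark that polarization together with the symmetry of $A_\nu$ identifies the full Hessian as $(\sin r/\lambda a^2)\langle (A_\nu - \cot r\, I)X, Y\rangle$, after which the equivalence with $\cot r$ being a principal curvature, i.e., with $S_{x_0}$ being a curvature sphere, is immediate.
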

\begin{proof}
(a) For any point $x_0 \in V$, there exists a sufficiently small neighborhood $U$ of $x_0$ such that the
restriction of $\phi$ to $U$ is an embedding. We will identify $U$ with its embedded image
$\phi (U) \subset S^n$ and omit the reference to the embedding $\phi$. For brevity, we will denote the
function $r_{(p,\xi)}$ simply as $r$.

Let $X$ be a
smooth vector field tangent to $V$ in such a neighborhood $U$ of $x_0$.  Then on $U$ we compute the derivative
\begin{equation}
\label{eq:3.6.5}
X(\cos r) = - \sin r \ X(r).
\end{equation}
Since $\sin r \neq 0$ for $r \in (0, \pi)$, we see that the functions $\cos r$ and $r$ have the same critical
points on $U$.  Using this and equation \eqref{eq:3.6.3}, we see that
\begin{equation}
\label{eq:3.6.6}
X(r) = 0 \Leftrightarrow X(\cos r) = 0 \Leftrightarrow X(x \cdot (\cos r\  p + \sin r\  \xi)) = 0.
\end{equation}
Then we compute
\begin{equation}
\label{eq:3.6.7}
X(x \cdot (\cos r p + \sin r \xi)) = X \cdot (\cos r p + \sin r \xi) + x \cdot (X(\cos r) p + X(\sin r) \xi).
\end{equation}
If $X(r) = 0$, then $X(\cos r) = X(\sin r) = 0$, and we get from the equation \eqref{eq:3.6.7} above that
\begin{equation}
\label{eq:3.6.8}
X \cdot (\cos r\  p + \sin r\  \xi) = 0.
\end{equation}
Thus, if the function $r = r_{(p,\xi)}$ has a critical point at $x_0$, we have
\begin{equation}
\label{eq:3.6.9}
X \cdot q = 0,
\end{equation}
for all $X \in T_{x_0}V$, where $ q = \cos r\  p + \sin r\  \xi$ is the center of the sphere $S_{x_0}$ in the parabolic
pencil of spheres determined by $(p, \xi)$ that contains the point $x_0$.  The normal space 
to the sphere $S_{x_0}$ in
${\bf R}^{n+1}$ at the point $x_0$
is spanned by the vectors $x_0$ and $q$.  So equation \eqref{eq:3.6.9}, together with the fact that
$X \cdot x_0 = 0$ for $X \in T_{x_0}V$, implies that the tangent space $T_{x_0}V$ is contained in the tangent
space $T_{x_0}S_{x_0}$, i.e., the sphere $S_{x_0}$ and the submanifold $\phi (V)$ are tangent at the point $\phi (x_0)$.

Conversely, if the tangent space $T_{x_0}V$ is contained in the tangent
space $T_{x_0}S_{x_0}$, then $X(r) = 0$ for all $X \in T_{x_0}V$, because $S_{x_0} - \{p\}$ is a level set of the function
$r$ in $S^n$. Thus, $r$ has a critical point at $x_0$.\\

\noindent 
(b) We now want to compute the 
Hessian of the function $r$ at a critical point $x_0 \in V$.  Let $X$ and $Y$ be
smooth vector fields tangent to $V$ on the neighborhood $U$ of $x_0$ in $V$ that was used in part (a).  
Then the value $H(X,Y)$ of 
the Hessian of $r$
at $x_0$ equals $YX(r)$ at $x_0$.  We first note that
\begin{equation}
\label{eq:3.6.10}
YX(\cos r) = Y(-\sin r \ X(r)) = Y(-\sin r) X(r) - \sin r \ YX(r).
\end{equation}
At the critical point $x_0$, we have $X(r) = 0$, and thus
\begin{equation}
\label{eq:3.6.11}
YX(\cos r) = - \sin r \ YX(r).
\end{equation}
On the other hand, by differentiating equation \eqref{eq:3.6.3} we get
\begin{equation}
\label{eq:3.6.12}
X(\cos r) = X \cdot (\cos r\  p + \sin r\  \xi) + x \cdot (X(\cos r) p + X(\sin r) \xi).
\end{equation}
Then by differentiating equation \eqref{eq:3.6.12}, we have
\begin{eqnarray}
\label{eq:3.6.13}
YX(\cos r) = D_Y X \cdot (\cos r\  p + \sin r\  \xi) + X \cdot (Y(\cos r) p + Y(\sin r) \xi) \\ 
 + \ Y \cdot (X(\cos r) p + X(\sin r) \xi) + x \cdot (YX(\cos r) p + YX(\sin r) \xi), \nonumber
\end{eqnarray}
where $D$ is the Euclidean covariant derivative on ${\bf R}^{n+1}$.  At the critical point $x_0$, we have
\begin{equation}
\label{eq:3.6.14}
X(\cos r) = X(\sin r) = Y(\cos r) = Y(\sin r) = 0,
\end{equation}
and so equation \eqref{eq:3.6.13} becomes
\begin{equation}
\label{eq:3.6.15}
YX(\cos r) = D_Y X \cdot (\cos r\  p + \sin r\  \xi) + x_0 \cdot (YX(\cos r) p + YX(\sin r) \xi).
\end{equation}
We now examine the two terms on the right side of equation \eqref{eq:3.6.15} separately.  Note that at the critical point
$x_0$, the vector $q = \cos r \ p + \sin r \ \xi$ lies in the normal
space to $\phi (V)$ at $x_0$, and so
\begin{equation}
\label{eq:3.6.16}
q = \cos r\  x_0 + \sin r\  N, 
\end{equation}
where $N$ is a unit normal vector to $\phi (V)$ at $x_0$.  Thus we can write the covariant derivative
$D_Y X$ as
\begin{equation}
\label{eq:3.6.17}
D_Y X = \nabla_Y X + \alpha (X,Y) - (X \cdot Y) x_0, 
\end{equation}
where $\nabla$ is the Levi--Civita connection on $\phi (V)$ induced from $D$, $\alpha$ is the second fundamental
form of $\phi (V)$ in $S^n$, and the term $- (X \cdot Y) x_0$ is the second fundamental
form of the sphere $S^n$ as a hypersurface in ${\bf R}^{n+1}$. 
Then using equation \eqref{eq:3.6.16} we obtain
\begin{eqnarray}
\label{eq:3.6.18}
D_Y X \cdot q &=& (\nabla_Y X + \alpha (X,Y) - (X \cdot Y) x_0) \cdot q \\
 &=& \alpha (X,Y) \cdot (\cos r\  x_0 + \sin r\  N) - (X \cdot Y) x_0 \cdot (\cos r\  x_0 + \sin r\  N) \nonumber \\
 &=& \sin r \ A_N X \cdot Y - \cos r \ X \cdot Y, \nonumber
\end{eqnarray}
since $\nabla_Y X$ is orthogonal to $q$, $\alpha (X,Y) \cdot x_0 = 0$, $N \cdot x_0 = 0$,  and 
\begin{displaymath}
\alpha (X,Y)\cdot N = A_N X \cdot Y,
\end{displaymath}
where $A_N$ is the shape operator
determined by the unit normal $N$ to $\phi (V)$ at $x_0$.

Next we consider the second term on the right side of equation \eqref{eq:3.6.15},
\begin{equation}
\label{eq:3.6.19}
x_0 \cdot (YX(\cos r) p + YX(\sin r) \xi).
\end{equation}
Note that at the critical point $x_0$ we have $X(r) = Y(r) = 0$, and so
\begin{eqnarray}
\label{eq:3.6.20}
YX(\cos r) &=& Y( - \sin r \ X(r)) = Y(- \sin r) X(r) - \sin r \ YX(r)\\ \nonumber 
&=& - \sin r \ YX(r),
\end{eqnarray}
and similarly
\begin{equation}
\label{eq:3.6.21}
YX(\sin r) = \cos r \ YX(r).
\end{equation}
Thus we have
\begin{eqnarray}
\label{eq:3.6.22}
YX (\cos r) p + YX(\sin r) \xi &=& - \sin r \ YX(r) p + \cos r \ YX(r) \xi \\
 &=& YX(r) ( -\sin r\  p + \cos r\  \xi). \nonumber 
\end{eqnarray}
So the term in equation \eqref{eq:3.6.19} above is
\begin{equation}
\label{eq:3.6.23}
x_0 \cdot (YX(\cos r) p + YX(\sin r) \xi) = YX(r) (x_0 \cdot ( -\sin r\  p + \cos r\  \xi)).
\end{equation}
From equations \eqref{eq:3.6.15}, \eqref{eq:3.6.18} and \eqref{eq:3.6.23}, we have
\begin{equation}
\label{eq:3.6.24}
YX(\cos r) = \sin r A_N X \cdot Y - \cos r X \cdot Y + YX(r) (x_0 \cdot ( -\sin r\  p + \cos r\  \xi)).
\end{equation}
Using equations \eqref{eq:3.6.11} and \eqref{eq:3.6.24}, we get
\begin{equation}
\label{eq:3.6.25}
(- \sin r - (x_0 \cdot ( -\sin r\  p + \cos r\  \xi))) YX(r) = \sin r A_N X \cdot Y - \cos r X \cdot Y.
\end{equation}
Denote the coefficient of $YX(r)$ in equation \eqref{eq:3.6.25} by 
\begin{equation}
\label{eq:3.6.26}
C = - \sin r - (x_0 \cdot ( -\sin r\  p + \cos r\  \xi)).
\end{equation}
Note that the vector 
\begin{equation}
\label{eq:3.6.27}
v =  -\sin r\  p + \cos r\  \xi
\end{equation}
is tangent at the point $q$ to the geodesic in $S^n$ from $p$ with initial tangent vector $\xi$.

On the sphere
$S_{x_0}$ centered at $q$, the function $x \cdot v$ takes its minimum value at $p$ where it is equal to $- \sin r$.  Thus
\begin{equation}
\label{eq:3.6.28}
x_0 \cdot ( -\sin r\  p + \cos r\  \xi) > - \sin r,
\end{equation}
since $x_0 \in S_{x_0}$ and $x_0 \neq p$.  So the term
\begin{equation}
\label{eq:3.6.29}
\sin r + x_0 \cdot ( -\sin r\  p + \cos r\  \xi) > 0,
\end{equation}
and the coefficient $C$ in equation \eqref{eq:3.6.26} is negative.  Thus we have from equation \eqref{eq:3.6.25},
\begin{eqnarray}
\label{eq:3.6.30}
YX(r) &=& (1/C) (\sin r \ (A_N X \cdot Y) - \cos r \ (X \cdot Y))\\
 &=& (1/C) (\sin r \ A_N - \cos r \ I) X \cdot Y.\nonumber
\end{eqnarray}
Thus the Hessian $H(X,Y)$ of $r$ at $x_0$ is degenerate if and only if there is a nonzero vector $X \in T_{x_0} V$
such that 
\begin{equation}
\label{eq:3.6.31}
(\sin r \ A_N - \cos r \ I) X = 0,
\end{equation}
that is,
\begin{equation}
\label{eq:3.6.32}
A_N X = \cot r \ X.
\end{equation}
This equation holds if and only if
$\cot r$ is an eigenvalue of $A_N$ with corresponding principal vector $X$, i.e., the point
$q = \cos r\  x_0 + \sin r\  N$ is a focal point of $\phi (V)$ at $x_0$, and thus the corresponding sphere $S_{x_0}$ is
a curvature sphere of $\phi (V)$ at $x_0$. 
\end{proof}

Next we show that except for $(p,\xi)$ in a set of measure zero in $T_1S^n$, the function $r_{(p,\xi)}$ is a Morse function
on $\phi (V)$.  This is accomplished by using Sard's Theorem 
in a manner similar to the usual proof that
a generic distance 
function is a Morse function on $\phi (V)$ (see, for example, Milnor \cite[pp. 32--38]{Mil}).
More specifically, from Lemma \ref{lem:3.6.3} we know that the function $r_{(p,\xi)}$,
for $p \notin \phi(V)$, is a Morse function
on $\phi (V)$ unless the parabolic pencil of unoriented spheres 
determined by $(p,\xi)$ contains a curvature
sphere of $\phi (V)$.  We now show that the set of $(p,\xi)$ in $T_1S^n$ such that the parabolic pencil determined
by $(p,\xi)$ contains a curvature sphere of $\phi (V)$ has measure zero in $T_1S^n$.

Let $B^{n-1}$ denote the unit normal
bundle of the submanifold $\phi (V)$ in $S^n$.  Note that in the case where
$\phi (V)$ is a hypersurface, $B^{n-1}$ is a two-sheeted covering of $V$.  We first recall the 
{\em normal exponential map} 
\begin{displaymath}q: B^{n-1} \times (0,\pi) \rightarrow S^n,
\end{displaymath}
defined as follows. For a point $(x,N)$ in $B^{n-1}$ and $r \in (0,\pi)$, we define
\begin{equation}
\label{eq:3.6.33}
q((x,N),r) = \cos r\  x + \sin r\  N.
\end{equation}
Next we define a $(2n-1)$-dimensional manifold $W^{2n-1}$ by
\begin{equation}
\label{eq:3.6.34}
W^{2n-1} = \{((x,N),r,\eta) \in B^{n-1} \times (0,\pi) \times S^n \mid  \eta \cdot q((x,N),r) = 0\}.
\end{equation}
The manifold $W^{2n-1}$ is a fiber bundle over $B^{n-1} \times (0,\pi)$ with fiber diffeomorphic to $S^{n-1}$.
For each point $((x,N),r) \in B^{n-1} \times (0,\pi)$, the fiber consists of all unit vectors 
$\eta$ in ${\bf R}^{n+1}$
that are tangent to $S^n$ at the point $q((x,N),r)$.

We define a map,
\begin{equation}
\label{eq:3.6.35}
F: W^{2n-1} \rightarrow T_1S^n,
\end{equation}
by
\begin{equation}
\label{eq:3.6.35a}
F((x,N),r,\eta) = (\cos r\  q + \sin r\  \eta,\   \sin r\  q - \cos r\  \eta),
\end{equation}
where $q = q((x,N),r)$ is defined in equation \eqref{eq:3.6.33}.
We will now show
that if the parabolic pencil of unoriented spheres 
determined by $(p,\xi)\in T_1S^n$ 
contains a curvature sphere of $\phi (V)$,
then $(p,\xi)$ is a critical value of $F$.  Since the set of critical values of $F$ has measure zero by 
Sard's Theorem
(see, for example, Milnor \cite[p. 33]{Mil}), this will give the desired conclusion.

The proof of the following lemma is taken from the book \cite[pp. 89--92]{Cec1}.

\begin{lemma}
\label{lem:3.6.4} 
Let $\phi:V \rightarrow S^n$ be an immersion of a connected manifold $V$ 
with $\dim V < n$ into $S^n$, and let $B^{n-1}$ be the
unit normal 
bundle of $\phi (V)$.  Define 
\begin{displaymath}
F: W^{2n-1} \rightarrow T_1S^n,
\end{displaymath}
as in equation \eqref{eq:3.6.35a}.  If the parabolic pencil of unoriented spheres determined by
$(p,\xi)$ in $T_1S^n$ contains a curvature sphere of $\phi (V)$, then $(p,\xi)$ is a critical value of $F$.
Thus, the set of such $(p,\xi)$ has measure zero in $T_1S^n$.
\end{lemma}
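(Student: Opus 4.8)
The plan is to show that whenever the parabolic pencil determined by $(p,\xi)$ contains a curvature sphere $S_{x_0}$ of $\phi(V)$, the map $F$ fails to be a submersion at a corresponding point of $W^{2n-1}$, so that $(p,\xi)$ is a critical value; Sard's Theorem then forces the set of such $(p,\xi)$ to have measure zero. First I would identify a natural point in the fiber of $F$ over $(p,\xi)$. A curvature sphere in the pencil means there is a point $(x_0,N)\in B^{n-1}$ and a radius $r_0\in(0,\pi)$ such that $q_0 = q((x_0,N),r_0) = \cos r_0\, x_0 + \sin r_0\, N$ is a focal point of $\phi(V)$ at $x_0$, i.e.\ $\cot r_0$ is an eigenvalue of the shape operator $A_N$ (this is exactly the focal-point condition from equation \eqref{eq:3.6.32} in Lemma \ref{lem:3.6.3}). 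I would then choose the vector $\eta$ in the fiber so that $F((x_0,N),r_0,\eta) = (p,\xi)$; comparing \eqref{eq:3.6.35a} with the form of the pencil, the point-sphere coordinate $\cos r_0\, q_0 + \sin r_0\, \eta$ should recover $p$ and the second slot should recover $\xi$, which pins down $\eta$ (up to the expected sign ambiguity).

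Next I would compute the differential $dF$ at this point $w_0 = ((x_0,N),r_0,\eta)$ and exhibit a tangent vector of $T_1S^n$ at $(p,\xi)$ that is not in its image, or equivalently show $dF_{w_0}$ has nontrivial kernel of dimension forcing non-surjectivity (since $\dim W^{2n-1} = \dim T_1S^n = 2n-1$, it suffices to show $dF_{w_0}$ is singular). The key is to differentiate the normal exponential map $q$ from \eqref{eq:3.6.33}: a tangent direction $X\in T_{x_0}V$ with $A_N X = \cot r_0\, X$ produces a degenerate variation of $q_0$, because $d q(X) = \cos r_0\, dx(X) + \sin r_0\, dN(X)$ and $dN(X) = -A_N X = -\cot r_0\, X$ gives $dq(X) = (\cos r_0 - \sin r_0\cot r_0)\,X = 0$ along the focal direction. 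I would trace this degeneracy through the formula \eqref{eq:3.6.35a} for $F$, organizing the computation by letting the variation move $(x_0,N)$ in the focal direction $X$ while adjusting $r$ and $\eta$ as needed to stay in $W^{2n-1}$, and checking that the resulting velocity of $F$ is constrained so that $dF_{w_0}$ drops rank. This is essentially the spherical analogue of the classical fact that focal points are exactly the critical values of the Euclidean normal exponential map.

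The main obstacle I expect is the bookkeeping in the differential of $F$: the map mixes $q$, $\eta$, and the radius $r$ nonlinearly through the $\cos r$ and $\sin r$ factors, and one must carefully account for the constraint $\eta\cdot q = 0$ defining $W^{2n-1}$ so that the admissible variations of $\eta$ are correctly parametrized. The clean way to handle this is to split $T_{w_0}W^{2n-1}$ into the horizontal directions coming from $B^{n-1}\times(0,\pi)$ and the vertical (fiber) directions in the $\eta$-sphere, and to show that the image of $dF_{w_0}$ omits the direction dual to the focal vector $X$ in $T_{(p,\xi)}T_1S^n$. Once the focal degeneracy $A_N X = \cot r_0\, X$ is shown to collapse one independent direction in the image, rank-counting gives that $(p,\xi)$ is a critical value, and Sard's Theorem delivers the measure-zero conclusion. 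Throughout, I would rely on Lemma \ref{lem:3.6.3}(b) to guarantee that ``curvature sphere in the pencil'' is precisely the focal-point condition, so that no independent geometric input beyond the differential computation is required.
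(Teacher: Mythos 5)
Your proposal is correct and follows essentially the same route as the paper's proof: locate the preimage point $((x_0,N_0),r_0,\eta_0)$ with $\eta_0 = \sin r_0\, p - \cos r_0\, \xi$, use the focal condition $A_{N_0}X = \cot r_0\, X$ to get $dq((X,0),0) = \cos r_0\, X - \sin r_0 \cot r_0\, X = 0$, conclude that $dF$ is singular since $\dim W^{2n-1} = \dim T_1S^n$, and invoke Sard's Theorem. The ``bookkeeping'' you flag for the fiber variable is handled in the paper exactly as you suggest, by parametrizing the fiber with a frame $E_1(q),\ldots,E_n(q)$ moving with $q$, so that $d\eta((X,0),0,0) = dE_n(dq((X,0),0)) = 0$ follows automatically from $dq = 0$ and the vector $((X,0),0,0)$ lies in the kernel of $dF$.
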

\begin{proof}
Suppose that $(p,\xi) \in T_1S^n$ is such that the sphere $S$ with center
\begin{equation}
\label{eq:3.6.38}
q_0 = \cos r_0\  p + \sin r_0\  \xi,
\end{equation}
and radius $r_0 \in (0,\pi)$ is a curvature sphere of $\phi (V)$ at $x_0 \in V$.  Then there is a unit normal
$N_0$ to $\phi (V)$ at $\phi (x_0)$ such that
\begin{equation}
\label{eq:3.6.39}
q_0 = \cos r_0\  x_0 + \sin r_0\  N_0,
\end{equation}
and $\cot r_0$ is an eigenvalue of the shape operator $A_{N_0}$ with corresponding nonzero principal vector $X$
such that
\begin{equation}
\label{eq:3.6.40}
A_{N_0} X = \cot r_0 \ X.
\end{equation}
(Here, as before, we suppress the notation for the immersion $\phi$ and write $x_0$ instead of
$\phi(x_0)$ in equation \eqref{eq:3.6.39}.)
Note that if we take
\begin{displaymath}
\eta_0 = \sin r_0\  p - \cos r_0\  \xi,
\end{displaymath}
then using equation \eqref{eq:3.6.38}, we get $\eta_0 \cdot q_0 = 0$, and
\begin{displaymath}
p = \cos r_0\  q_0 + \sin r_0\  \eta_0, \quad \xi = \sin r_0\  q_0 - \cos r_0\  \eta_0,
\end{displaymath}
so that from equation \eqref{eq:3.6.39}, we have
\begin{displaymath}
(p, \xi) = F ((x_0,N_0), r_0, \eta_0).
\end{displaymath}

We now want to show that $((x_0,N_0), r_0, \eta_0)$ is a critical point of $F$, and thus $(p,\xi)$ is a critical
value of $F$. To compute the differential $dF$ at $((x_0,N_0), r_0, \eta_0)$, we need to put local coordinates
on a neighborhood of this point in $W^{2n-1}$.  First, we choose local coordinates on the unit normal  bundle
$B^{n-1}$ in a neighborhood of the point $(x_0, N_0)$ in $B^{n-1}$ in the following way.  
Suppose that $V$ has dimension $k \leq n-1$.
Let $U$ be a local coordinate neighborhood of $x_0$ in $V$ with coordinates $(u_1,\ldots,u_k)$ such that
$x_0$ has coordinates $(0,\ldots,0)$.  Choose orthonormal
normal vector fields, 
\begin{displaymath}
N_0,N_1,\ldots,N_{n-1-k},
\end{displaymath}
on $U$ such that the vector field $N_0$ agrees with the given vector $N_0$ at $x_0$, and $\nabla_X^\perp N_0 = 0$
at $x_0$, where $\nabla^\perp$ is the connection in the normal 
bundle to $\phi (V)$, for $X$ as in equation \eqref{eq:3.6.40}.  If $x \in U$ and
$N$ is a unit normal vector to $\phi (V)$ at $\phi(x)$ with $N \cdot N_0 > 0$, then we can write
\begin{equation}
\label{eq:3.6.41}
N = (1 - \sum_{i=1}^{n-1-k} s_i^2)^{1/2}N_0 + s_1 N_1+ \cdots + s_{n-1-k} N_{n-1-k}, 
\quad \sum_{i=1}^{n-1-k} s_i^2 < 1. 
\end{equation}
Thus $(u_1,\ldots,u_k,s_1,\ldots,s_{n-1-k})$ are local coordinates on an open set $O$ in 
the unit normal bundle $B^{n-1}$ over the
open set $U \subset V$, and $(x_0,N_0)$ has coordinates $(0,\ldots,0)$. 
(In the case where $V$ has codimension one in $S^n$, just use the coordinates
$(u_1,\ldots,u_{n-1})$ from $U$, since $B^{n-1}$ is a 2-sheeted covering of $U$.) 
Therefore, any tangent vector to $B^{n-1}$ at a point $(x,N)$ can be written in the
form $(Y,W)$, where $Y \in T_xV$ and $W$ is a linear combination of $\{\partial /\partial s_1,\ldots,\partial /\partial 
s_{n-1-k}\}$.

Next we wish to get local coordinates on the $S^{n-1}$-fiber near the vector $\eta_0$ at $q_0$.    
Let $\{E_1,\ldots,E_n\}$ be a local orthonormal frame of tangent vectors to $S^n$ in a neighborhood of the
point $q_0$ defined in equation \eqref{eq:3.6.38} such that $E_n(q_0) = \eta_0$.  Then we define
for $((x,N),r)$ near $((x_0,N_0),r_0)$ in $B^{n-1} \times (0,\pi)$,
\begin{equation}
\label{eq:3.6.36}
\eta((x,N),r,(t_1,\ldots,t_{n-1})) = t_1 E_1 (q) + \cdots + (1 - \sum_{i=1}^{n-1} t_i^2)^{1/2} E_n (q),
\end{equation}
where $q = q((x,N),r)$ is defined in equation \eqref{eq:3.6.33}, and $\sum_{i=1}^{n-1} t_i^2 < 1$. 
Thus we have local coordinates, 
\begin{displaymath}
(u_1,\ldots,u_k,s_1,\ldots,s_{n-1-k}, r, t_1,\ldots,t_{n-1}),
\end{displaymath}
on a neighborhood
of the point $((x_0,N_0),r_0,\eta_0)$ in $W^{2n-1}$, and the point $((x_0,N_0),r_0,\eta_0)$ has coordinates
$(0,\ldots,0,r_0,0,\ldots,0)$.
 
We now want to calculate the differential $dF$ of the vector $((X,0),0,0)$ tangent to 
$W^{2n-1}$ at the point $((x_0,N_0),r_0,\eta_0)$.
We begin by computing the differential
$dq((X,0),0)$ at the point $((x_0,N_0),r_0)$, where $q$ is given by equation \eqref{eq:3.6.33}.  Let $\gamma (t)$
be a curve in $U$ with $\gamma(0) = x_0$ and initial tangent vector $\gamma'(0) = X$.  Then $(\gamma(t),N_0(\gamma(t))$
is a curve in $B^{n-1}$ with initial tangent vector $(X,0)$.  So at $((x_0,N_0),r_0)$, we have that
$dq((X,0),0)$ is the initial tangent vector to the curve
\begin{equation}
\label{eq:3.6.42}
\cos r_0\  \gamma(t) + \sin r_0\  N_0(\gamma(t)),
\end{equation}
and so
\begin{equation}
\label{eq:3.6.43}
dq((X,0),0) = \cos r_0\  X + \sin r_0\  D_X N_0.
\end{equation}
Since $X \cdot N_0 = 0$, we have $D_X N_0 = \widetilde{\nabla}_X N_0$, where $\widetilde{\nabla}$ is the Levi--Civita
connection on $S^n$, and we know that
\begin{equation}
\label{eq:3.6.44}
\widetilde{\nabla}_X N_0 = - A_{N_0} X + \nabla_X^\perp N_0.
\end{equation}
We have chosen $N_0$ so that $\nabla_X^\perp N_0 = 0$, so by equation \eqref{eq:3.6.40}, we have
\begin{equation}
\label{eq:3.6.45}
\widetilde{\nabla}_X N_0 = - A_{N_0} X = - \cot r_0\  X.
\end{equation}
Thus by equations \eqref{eq:3.6.43} and \eqref{eq:3.6.45}, we have
\begin{equation}
\label{eq:3.6.46}
dq((X,0),0) = \cos r_0\  X + \sin r_0 (- \cot r_0\  X) = 0.
\end{equation}
Next we want to compute the differential $d\eta ((X,0),0,0)$ at $((x_0,N_0),r_0,\eta_0)$, 
for $\eta$ as defined in equation \eqref{eq:3.6.36}. Note that the coordinates $(t_1,\ldots,t_{n-1})$ are
$(0,\ldots,0)$ at $((x_0,N_0),r_0,\eta_0)$, since $E_n(q_0) = \eta_0$. 

From equations
\eqref{eq:3.6.36} and \eqref{eq:3.6.46}, we see that 
\begin{equation}
\label{eq:3.6.47}
d\eta((X,0),0,0) = dE_n (dq((X,0),0)) = 0,
\end{equation}
at the point $((x_0,N_0),r_0,\eta_0)$, since $dq((X,0),0) = 0$.  
Thus from equations \eqref{eq:3.6.46} and \eqref{eq:3.6.47}, we have
\begin{eqnarray}
\label{eq:3.6.48}
dF((X,0),0,0) = ( \cos r_0\  dq((X,0),0) &+& \sin r_0\  d\eta((X,0),0,0), \\
\sin r_0\  dq((X,0),0) &-& \cos r_0\  d\eta((X,0),0,0)) = (0,0). \nonumber
\end{eqnarray} 
Thus $((x_0,N_0),r_0,\eta_0)$ is a critical point of $F$, and the contact element
$(p,\xi) = F((x_0,N_0),r_0,\eta_0)$ is a critical value of $F$.  This shows that every contact element whose
corresponding parabolic pencil contains a curvature sphere of $\phi (V)$ is a critical value
of $F$. By Sard's Theorem, 
the set of critical values of the map $F$  has measure zero in $T_1S^n$, and so the
set of contact elements $(p,\xi)$ in $T_1S^n$ whose parabolic pencil contains a curvature sphere of $\phi (V)$
has measure zero. 
\end{proof}

As a consequence of Lemmas \ref{lem:3.6.3} and \ref{lem:3.6.4}, we have the following corollary.

\begin{corollary}
\label{cor:3.6.5} 
Let $\phi:V \rightarrow S^n$ be an immersion of a connected manifold $V$ with $\dim V < n$ into
$S^n$.  For almost all
$(p,\xi) \in T_1S^n$, the function $r_{(p,\xi)}$ is a Morse function on $V$. 
\end{corollary}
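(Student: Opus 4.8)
The plan is to identify the set of ``bad'' contact elements $(p,\xi)$ --- those for which $r_{(p,\xi)}$ fails to be a Morse function on $V$ --- and to show directly that this set has measure zero in $T_1S^n$, so that its complement is a full-measure set on which $r_{(p,\xi)}$ is a Morse function. The two lemmas above already do almost all of the work; the remaining task is to assemble them and to account for the one case that the lemmas do not cover, namely when $r_{(p,\xi)}$ is not even defined on all of $V$.

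First I would dispose of the set
\[
B_1 = \{(p,\xi) \in T_1S^n \mid p \in \phi(V)\}.
\]
The function $r_{(p,\xi)}$ is only defined on $S^n - \{p\}$, so the composition $r_{(p,\xi)} \circ \phi$ is a smooth function on all of $V$ precisely when $p \notin \phi(V)$. Since $\dim V = k < n$, the image $\phi(V)$ is a set of dimension at most $k$ in $S^n$, and hence $B_1$, which fibers over $\phi(V)$ with $(n-1)$-dimensional fibers, has dimension at most $k + (n-1) < 2n-1 = \dim T_1S^n$. Therefore $B_1$ has measure zero in $T_1S^n$.

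Next, for $(p,\xi) \notin B_1$ the hypothesis $p \notin \phi(V)$ of Lemmas \ref{lem:3.6.3} and \ref{lem:3.6.4} is satisfied, so $r_{(p,\xi)}$ is defined and smooth on $V$. By Lemma \ref{lem:3.6.3}(b), a critical point $x_0$ of $r_{(p,\xi)}$ is degenerate if and only if the sphere $S_{x_0}$ is a curvature sphere of $\phi(V)$ at $x_0$; equivalently, $r_{(p,\xi)}$ can fail to be a Morse function only when the parabolic pencil of unoriented spheres determined by $(p,\xi)$ contains a curvature sphere of $\phi(V)$. Let $B_2$ denote the set of such $(p,\xi)$. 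By Lemma \ref{lem:3.6.4}, every element of $B_2$ is a critical value of the map $F:W^{2n-1} \rightarrow T_1S^n$, and hence, by Sard's Theorem, $B_2$ has measure zero in $T_1S^n$.

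Finally I would combine these two facts. The set of contact elements for which $r_{(p,\xi)}$ fails to be a Morse function on $V$ is contained in $B_1 \cup B_2$, a union of two sets of measure zero, and is therefore itself of measure zero. Consequently, for almost all $(p,\xi) \in T_1S^n$ the function $r_{(p,\xi)}$ is defined on all of $V$ and has only nondegenerate critical points, i.e., it is a Morse function on $V$. There is no serious obstacle here, since the analytic content is entirely contained in the two preceding lemmas; the only point requiring care is the bookkeeping of $B_1$: because both lemmas assume $p \notin \phi(V)$, one must remember that this exclusion is itself a measure-zero condition, which is exactly where the dimension hypothesis $\dim V < n$ is used a second time.
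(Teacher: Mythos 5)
Your proof is correct and takes essentially the same route as the paper: the paper likewise splits the bad set into the contact elements with $p \in \phi(V)$ (measure zero since $\phi(V)$ has codimension at least one in $S^n$) and those whose parabolic pencil contains a curvature sphere (measure zero by Lemma \ref{lem:3.6.4}), then takes the union of these two measure-zero sets. Your dimension count for the set $B_1$ is just a more explicit version of the paper's one-line justification.
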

\begin{proof}
By Lemma \ref{lem:3.6.3}, the function $r_{(p,\xi)}$ is a Morse function on $V$ if and only if 
$p \notin \phi (V)$ and the parabolic
pencil of unoriented spheres determined by $(p,\xi)$ does not contain a curvature sphere of $\phi(V)$.
The set of $(p,\xi)$ such that $p \in \phi (V)$ has measure zero, since $\phi(V)$ is a submanifold of codimension
at least one in $S^n$. The set of $(p,\xi)$ such that the parabolic pencil determined by $(p,\xi)$ contains
a curvature sphere of $\phi(V)$ has measure zero by Lemma \ref{lem:3.6.4}.  Thus, except for $(p,\xi)$ in the 
set of measure zero obtained by taking the union of these two sets, 
the function $r_{(p,\xi)}$ is a Morse function on $V$.
\end{proof}

We are now ready to give a definition of tautness for Legendre submanifolds in Lie sphere geometry
(see  \cite[pp. 89--92]{Cec1}).  Recall the
diffeomorphism from $T_1S^n$ to the manifold $\Lambda^{2n-1}$ of lines
on the Lie 
quadric $Q^{n+1}$ given by
equations \eqref{eq:3.1.8} and \eqref{eq:3.1.9},
\begin{equation}
\label{eq:3.6.49}
(p,\xi) \mapsto [(1,p,0),(0,\xi,1)] = \ell \in \Lambda^{2n-1}.
\end{equation}
Under this correspondence, an oriented sphere $S$ in $S^n$ belongs to the 
parabolic pencil of oriented
spheres determined by $(p,\xi) \in T_1S^n$ if and only if the point $[k]$ in $Q^{n+1}$ corresponding to $S$
lies on the line $\ell$. Thus, the parabolic pencil of oriented spheres determined by a 
contact element $(p,\xi)$ contains a
curvature sphere
$S$ of a Legendre submanifold $\lambda: B^{n-1} \rightarrow \Lambda^{2n-1}$ if and only if
the corresponding line $\ell$ contains the point $[k]$ corresponding to $S$.  We now define the notion of Lie-tautness
for compact Legendre submanifolds as follows.  Here by ``almost every,'' we mean except for a set of measure zero.

\begin{definition}
\label{def:3.6.6} 
{\rm A compact, connected Legendre submanifold 
\begin{displaymath}
\lambda: B^{n-1} \rightarrow \Lambda^{2n-1}
\end{displaymath}
is said to be
{\em Lie-taut} if for almost every line $\ell$ on the Lie quadric $Q^{n+1}$, the number of points
$x \in B^{n-1}$ such that $\lambda(x)$ intersects $\ell$ is $\beta(B^{n-1};{\bf Z}_2)/2$, i.e., one-half
the sum of the ${\bf Z}_2$-Betti numbers of $B^{n-1}$.}
\end{definition}

Equivalently, this definition says that for almost every contact element $(p,\xi)$ in $T_1S^n$, the number of points
$x \in B^{n-1}$ such that the contact element corresponding to $\lambda(x)$ is in oriented contact with some sphere
in the parabolic pencil of oriented spheres determined by $(p,\xi)$ is $\beta(B^{n-1};{\bf Z}_2)/2$.

The property of Lie-tautness is clearly invariant under Lie sphere transformations, that is, if 
$\lambda: B^{n-1} \rightarrow \Lambda^{2n-1}$ is Lie-taut and $\alpha$ is a Lie sphere transformation, then
the Legendre submanifold $\alpha \lambda: B^{n-1} \rightarrow \Lambda^{2n-1}$ is also Lie-taut.  This follows
from the fact that the line $\lambda (x)$ intersects a line $\ell$ if and only if the line 
$\alpha (\lambda (x))$ intersects the line $\alpha (\ell)$, and $\alpha$ maps the complement of a set of measure zero 
in $\Lambda^{2n-1}$ to 
the complement of a set of measure zero in $\Lambda^{2n-1}$.

\begin{remark}
\label{rem:3.6.7} 
{\rm The factor of one-half in the definition comes from the fact that Lie sphere geometry deals with oriented contact and 
not just unoriented tangency, as is made clear in the proof of Theorem \ref{thm:3.6.6} below.  
Note here that if
$\phi:V \rightarrow S^n$ is an embedding of a compact, connected manifold $V$ into $S^n$, and $B^{n-1}$ is the
unit normal bundle of $\phi (V)$, then the 
{\em Legendre lift} of $\phi$ is defined to be the Legendre submanifold
$\lambda: B^{n-1} \rightarrow \Lambda^{2n-1}$ given by
\begin{equation}
\label{eq:3.6.50}
\lambda(x,N) = [(1,\phi(x),0),(0,N,1)],
\end{equation}
where $N$ is a unit normal vector to $\phi(V)$ at $\phi(x)$.
If $V$ has dimension $n-1$, then $B^{n-1}$ is a two-sheeted covering of $V$.  If $V$ has dimension less than $n-1$,
then $B^{n-1}$ is diffeomorphic to a tube $W^{n-1}$ of sufficiently small radius over $\phi(V)$ so that $W^{n-1}$
is an embedded hypersurface in $S^n$.  In either case,
\begin{displaymath}
\beta(B^{n-1};{\bf Z}_2) = 2 \beta(V;{\bf Z}_2).
\end{displaymath}
This is obvious
in the case where $V$ has dimension $n-1$, and it was proved by Pinkall \cite{P5} in the case where $V$
has dimension less than $n-1$.} 
\end{remark}

Since Lie-tautness is invariant under Lie sphere transformations, the following theorem establishes
that tautness is Lie invariant.  Recall that a taut immersion $\phi:V \rightarrow S^n$ must in fact be an embedding
(see Carter and West \cite{CW1}).

\begin{theorem}
\label{thm:3.6.6} 
Let $\phi:V \rightarrow S^n$ be an embedding of a compact, connected manifold $V$ with $\dim V < n$ into $S^n$. Then
$\phi(V)$ is a taut submanifold in $S^n$ if and only if the Legendre lift $\lambda: B^{n-1} \rightarrow \Lambda^{2n-1}$
of $\phi$ is Lie-taut.
\end{theorem}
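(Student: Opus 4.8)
The plan is to use Kuiper's homological criterion (Theorem \ref{thm:3.6.2}) as a bridge between the two families of functions, and then to translate the Morse theory of the pencil functions $r_{(p,\xi)}$ into the intersection count of Definition \ref{def:3.6.6}. The starting observation is that for a fixed contact element $(p,\xi)$ and a value $c \in (0,\pi)$, the sublevel set $V_c(r_{(p,\xi)})$ equals $\phi^{-1}(B)$, where $B$ is the closed ball in $S^n$ bounded by the radius-$c$ sphere of the parabolic pencil determined by $(p,\xi)$; conversely, every closed ball in $S^n$ is bounded by a sphere $\Sigma$, and for each $p \in \Sigma$ there is a pencil direction $\xi$ making $\Sigma$ a level set of $r_{(p,\xi)}$, so every closed ball arises as such a sublevel set. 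This is the precise sense in which the pencil functions $r_{(p,\xi)}$ probe the same closed balls as the concentric distance functions $d_q$.

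I would then establish the equivalence: $\phi$ is taut if and only if for almost every $(p,\xi) \in T_1S^n$ the function $r_{(p,\xi)}$ is a perfect ${\bf Z}_2$-Morse function on $V$. The forward direction is direct: if $\phi$ is taut, Theorem \ref{thm:3.6.2} gives injectivity of $H_*(\phi^{-1}(B)) \to H_*(V)$ for every closed ball $B$, hence for every sublevel set $V_c(r_{(p,\xi)}) = \phi^{-1}(B_c)$; by Theorem \ref{thm:3.6.1} each nondegenerate $r_{(p,\xi)}$ is then perfect, and by Corollary \ref{cor:3.6.5} almost every one is nondegenerate. For the converse, injectivity is initially known only for the full-measure family of balls that are sublevel sets of perfect functions $r_{(p,\xi)}$; I would write an arbitrary closed ball $B$ as a decreasing intersection $B = \bigcap_j B_j$ of such good balls with $\partial B_j \to \partial B$, and invoke the continuity of ${\bf Z}_2$-\v{C}ech homology (the property Kuiper used for Theorem \ref{thm:3.6.2}) to obtain injectivity for $B$ as a limit of injective maps. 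Tautness then follows from Theorem \ref{thm:3.6.2}.

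It remains to identify ``almost every $r_{(p,\xi)}$ perfect'' with Lie-tautness by counting. Under the correspondence \eqref{eq:3.6.49}, the line $\lambda(x,N)$ meets the line $\ell$ associated to $(p,\xi)$ exactly when the contact element $(x,N)$ is in oriented contact with some sphere of the pencil of $(p,\xi)$. By Lemma \ref{lem:3.6.3}(a), the critical points of $r_{(p,\xi)}$ on $V$ are the points $x_0$ where $\phi(V)$ is tangent to the unique pencil sphere $S_{x_0}$ through $\phi(x_0)$; the center of $S_{x_0}$ singles out one unit normal $N_0$, so exactly one point $(x_0,N_0) \in B^{n-1}$ has its line meeting $\ell$, the opposite normal $(x_0,-N_0)$ being excluded because Lie geometry records oriented contact rather than unoriented tangency. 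This yields a bijection between the critical set of $r_{(p,\xi)}$ on $V$ and the points of $B^{n-1}$ counted in Definition \ref{def:3.6.6}, and since $\beta(B^{n-1};{\bf Z}_2) = 2\beta(V;{\bf Z}_2)$ by Remark \ref{rem:3.6.7}, perfection of $r_{(p,\xi)}$ for almost every $(p,\xi)$ is exactly the statement that the intersection count equals $\beta(B^{n-1};{\bf Z}_2)/2$ for almost every $\ell$.

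The main obstacle I expect is the converse half of the middle equivalence: the genericity supplied by Sard's Theorem (Corollary \ref{cor:3.6.5}) yields perfection, and hence injectivity, only for a full-measure family of balls, and upgrading this to injectivity for every closed ball is not automatic. Here one must lean on the continuity of ${\bf Z}_2$-\v{C}ech homology together with a careful construction of approximating balls $B_j \downarrow B$ whose boundary spheres are level sets of perfect functions $r_{(p_j,\xi_j)}$. The counting step is the conceptual heart of the argument, expressing the factor $1/2$ as the passage from oriented contact to unoriented tangency, but once Lemma \ref{lem:3.6.3} is in hand it reduces to the orientation bookkeeping described above.
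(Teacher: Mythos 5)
Your proposal is correct and takes essentially the same approach as the paper's proof: the same identification of sublevel sets of $r_{(p,\xi)}$ with ball preimages, the same use of Theorems \ref{thm:3.6.1} and \ref{thm:3.6.2}, Lemma \ref{lem:3.6.3} and Corollary \ref{cor:3.6.5}, the same oriented-contact bookkeeping that produces the factor $1/2$ via $\beta(B^{n-1};{\bf Z}_2) = 2\beta(V;{\bf Z}_2)$, and the same nested-sequence-of-balls argument using the continuity of ${\bf Z}_2$-\v{C}ech homology for the converse. Your reorganization into an intermediate equivalence (taut if and only if almost every $r_{(p,\xi)}$ is a perfect Morse function) followed by a counting step is only a cosmetic repackaging of the paper's two-directional argument.
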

\begin{proof}
Suppose that $\phi(V)$ is a taut submanifold in $S^n$, and let 
\begin{displaymath}
\lambda: B^{n-1} \rightarrow \Lambda^{2n-1}
\end{displaymath}
be the Legendre lift of $\phi$.
Let $(p,\xi) \in T_1S^n$ such that $p \notin \phi(V)$
and such that the parabolic pencil of unoriented spheres 
determined by $(p,\xi)$ does not contain a curvature
sphere of $\phi(V)$.  By Lemma \ref{lem:3.6.4}, the set of such $(p,\xi)$ is the complement of a set of measure
zero in $T_1S^n$.  For such $(p,\xi)$, the function $r_{(p,\xi)}$ is a Morse function on $V$, and the sublevel set 
\begin{equation}
\label{eq:3.6.51}
V_s(r_{(p,\xi)}) = \{x \in V \mid r_{(p,\xi)}(x) \leq s\} = \phi(V) \cap B, \quad 0 < s < \pi,
\end{equation}
is the intersection of $\phi(V)$ with a closed ball $B \subset S^n$.  By tautness and Theorem~\ref{thm:3.6.2}, the map on 
${\bf Z}_2$-\v{C}ech homology,
\begin{equation}
\label{eq:3.6.52}
H_*(V_s(r_{(p,\xi)})) = H_*(\phi^{-1}(B)) \rightarrow H_*(V),
\end{equation}
is injective for every $s \in {\bf R}$, and so by Theorem~\ref{thm:3.6.1}, the function $r_{(p,\xi)}$ has
$\beta(V;{\bf Z}_2)$ critical points on $V$.  

By Lemma \ref{lem:3.6.3}, a point $x \in V$ is a critical point of $r_{(p,\xi)}$ if and only if
the unoriented sphere $S_x$ in the parabolic pencil determined
by $(p,\xi)$ containing $x$ is tangent to $\phi(V)$ at $\phi(x)$. At each such point $x$, exactly one contact
element $(x,N) \in B^{n-1}$ is in oriented contact with the oriented sphere $\widetilde{S}_x$ through $x$
in the parabolic pencil of oriented spheres determined by $(p,\xi)$.  Thus, the number of critical
points of $r_{(p,\xi)}$ on $V$ equals the number of points $(x,N) \in B^{n-1}$ such that $(x,N)$ is in oriented contact
with an oriented sphere in the parabolic pencil of oriented spheres
determined by $(p,\xi)$. 

Thus there are 
\begin{displaymath}
\beta(V;{\bf Z}_2) = \beta(B^{n-1};{\bf Z}_2)/2
\end{displaymath}
points $(x,N) \in B^{n-1}$ such that $(x,N)$ is in oriented contact
with an oriented sphere in the parabolic pencil of oriented spheres
determined by $(p,\xi)$.
This means that there
are $\beta(B^{n-1};{\bf Z}_2)/2$ points $(x,N) \in B^{n-1}$ such that the line $\lambda(x,N)$ 
intersects the line
$\ell$ on $Q^{n+1}$ corresponding to the contact element $(p,\xi)$.
Since this true for almost every $(p,\xi) \in T_1S^n$, the 
Legendre lift $\lambda$ of $\phi$ is Lie-taut.

Conversely, suppose that the Legendre lift $\lambda: B^{n-1} \rightarrow \Lambda^{2n-1}$ of $\phi$ is Lie-taut.
Then for all $(p,\xi) \in T_1S^n$ except for a set $Z$ of measure zero, the number of points
$(x,N) \in B^{n-1}$ that are in oriented contact with some sphere
in the parabolic pencil of oriented spheres determined by $(p,\xi)$ is $\beta(B^{n-1};{\bf Z}_2)/2 = \beta(V;{\bf Z}_2)$.
This means that the corresponding
function $r_{(p,\xi)}$ has $\beta(V;{\bf Z}_2)$ critical points on $V$.
By Theorem~\ref{thm:3.6.1}, this implies that for a closed ball $B \subset S^n$ such that $\phi^{-1}(B) = V_s(r_{(p,\xi)})$ 
for $(p,\xi) \notin Z$ and $s \in {\bf R}$, the map on homology,
\begin{equation}
\label{eq:3.6.53}
H_*(\phi^{-1}(B)) \rightarrow H_*(V),
\end{equation}
is injective. On the other hand, 
if $B$ is a closed ball corresponding to a sublevel set of $r_{(p,\xi)}$ for $(p,\xi) \in Z$,
then since $Z$ has measure zero, one can produce a nested sequence,
\begin{displaymath}
\{B_i\}, \quad i= 1,2,3,\ldots,
\end{displaymath}
of closed balls (coming from $r_{(p,\xi)}$ for 
$(p,\xi) \notin Z$) satisfying 
\begin{equation}
\label{eq:3.6.54}
\phi^{-1}(B_i) \supset \phi^{-1}(B_{i+1}) \supset \cdots \supset \cap_{j=1}^\infty \phi^{-1}(B_j) = \phi^{-1}(B),
\end{equation}
for $i= 1,2,3,\ldots,$ such that the homomorphism in ${\bf Z}_2$-homology,
\begin{equation}
\label{eq:3.6.55}
H_*(\phi^{-1}(B_i)) \rightarrow H_*(V),\ {\mbox{\rm is injective for }}i= 1,2,3,\ldots
\end{equation}
If equations \eqref{eq:3.6.54} and \eqref{eq:3.6.55} are satisfied, then the map
\begin{equation}
\label{eq:3.6.55a}
H_*(\phi^{-1}(B_i)) \rightarrow H_*(\phi^{-1}(B_j))\ {\mbox{\rm is injective for all }}i > j.
\end{equation}
The continuity property of \v{C}ech homology
(see Eilenberg--Steenrod \cite[p. 261]{EiS}) says that
\begin{displaymath}
H_*(\phi^{-1}(B)) = \stackrel{\leftarrow}{\lim_{i \rightarrow \infty}} H_*(\phi^{-1}(B_i)).
\end{displaymath}
Equation \eqref{eq:3.6.55a} and Theorem 3.4 of Eilenberg--Steenrod \cite[p. 216]{EiS} on 
inverse limits imply that the map
\begin{displaymath}
H_*(\phi^{-1}(B)) \rightarrow H_*(\phi^{-1}(B_i))
\end{displaymath}
is injective for each $i$.  Thus, from equation \eqref{eq:3.6.55}, we get that the map
\begin{displaymath}
H_*(\phi^{-1}(B)) \rightarrow H_*(V)
\end{displaymath}
is also injective.  Since this holds for all closed balls $B$ in $S^n$, the embedding $\phi(V)$ is taut by
Theorem \ref{thm:3.6.2}. (See Kuiper \cite{Ku3} or
Cecil--Ryan \cite[Theorem 5.4, pp. 25--26]{CR7} for an example
of this type of \v{C}ech homology argument.)
\end{proof}

\noindent
Another formulation of the Lie invariance of tautness is the following corollary.

\begin{corollary}
\label{cor:3.6.7} 
Let $\phi:V \rightarrow S^n$ and $\psi:V \rightarrow S^n$ be two embeddings of a compact, connected  
manifold $V$ with $\dim V < n$ into $S^n$, 
such that their corresponding Legendre lifts are Lie equivalent.
Then $\phi$ is a taut embedding  if and only if $\psi$ is a taut embedding.
\end{corollary}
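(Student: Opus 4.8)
The plan is to assemble the corollary from two facts already in hand: the equivalence, furnished by Theorem~\ref{thm:3.6.6}, between tautness of an embedded submanifold of $S^n$ and Lie-tautness of its Legendre lift, and the invariance of Lie-tautness under Lie sphere transformations recorded in the paragraph following Definition~\ref{def:3.6.6}. All of the geometric and homological work has been carried out in establishing those two statements, so the corollary follows by a short formal argument.

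Concretely, I would let $\lambda: B^{n-1} \to \Lambda^{2n-1}$ be the Legendre lift of $\phi$ and let $\mu: B^{n-1} \to \Lambda^{2n-1}$ be the Legendre lift of $\psi$. By the definition of Lie equivalence given in Section~\ref{sec:3}, the hypothesis that these lifts are Lie equivalent means that there is a Lie sphere transformation $\beta$ whose induced map $\widetilde\beta$ on $\Lambda^{2n-1}$ satisfies $\mu = \widetilde\beta\lambda$, written $\beta\lambda$ for short. Applying Theorem~\ref{thm:3.6.6} to each embedding gives that $\phi$ is taut if and only if $\lambda$ is Lie-taut, and that $\psi$ is taut if and only if $\mu$ is Lie-taut. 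Finally, since $\mu = \beta\lambda$ and Lie-tautness is preserved under the action of any Lie sphere transformation, $\lambda$ is Lie-taut if and only if $\mu$ is Lie-taut; here one uses, as in the paragraph after Definition~\ref{def:3.6.6}, that $\beta$ preserves the incidence relation between the lines $\lambda(x)$ and an arbitrary line $\ell$ on $Q^{n+1}$, and that $\beta$ carries complements of measure-zero sets in $\Lambda^{2n-1}$ to complements of measure-zero sets. Concatenating the three equivalences yields that $\phi$ is taut if and only if $\psi$ is taut.

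There is no genuine obstacle at this stage: the entire difficulty of the result has been front-loaded into Theorem~\ref{thm:3.6.6} and the invariance statement. The only point meriting a moment's attention is purely formal, namely that the hypotheses of the corollary ($V$ compact and connected, $\dim V < n$, and both maps embeddings) are exactly those under which Theorem~\ref{thm:3.6.6} was proved, so that the theorem applies verbatim to both $\phi$ and $\psi$ with no additional verification. Thus the corollary is an immediate consequence, and its proof amounts to recording the chain of equivalences above.
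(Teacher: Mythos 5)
Your proposal is correct and follows essentially the same route as the paper: apply Theorem~\ref{thm:3.6.6} to each embedding and then use the invariance of Lie-tautness under Lie sphere transformations to connect the two Legendre lifts. The only detail the paper makes explicit that you pass over silently is that Lie equivalence forces the unit normal bundles of $\phi(V)$ and $\psi(V)$ to be diffeomorphic, which justifies writing both lifts with the common domain $B^{n-1}$.
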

\begin{proof}
Since the Legendre lifts of $\phi$ and $\psi$ are Lie equivalent, the unit normal bundles of $\phi (V)$ and 
$\psi (V)$ must be diffeomorphic, and we will denote them both by $B^{n-1}$. Now 
let $\lambda: B^{n-1} \rightarrow \Lambda^{2n-1}$ and $\mu: B^{n-1} \rightarrow \Lambda^{2n-1}$ be the Legendre
lifts of $\phi$ and $\psi$, respectively.  By Theorem \ref{thm:3.6.6}, $\phi$ is taut if and only if $\lambda$ is
Lie-taut, and $\psi$ is taut if and only if $\mu$ is Lie-taut.  Further, since $\lambda$ and $\mu$ are Lie equivalent,
$\lambda$ is Lie-taut if and only if $\mu$ is Lie-taut, so it follows that $\phi$ is taut if and only if $\psi$ is taut.
\end{proof}

\noindent Department of Mathematics and Computer Science

\noindent College of the Holy Cross

\noindent Worcester, MA 01610, U.S.A.

\noindent email: tcecil@holycross.edu

\end{document}